\newtheorem{theorem}{Theorem}[section]
\newtheorem{lemma}[theorem]{Lemma}
\newtheorem{proposition}[theorem]{Proposition}
\theoremstyle{definition}
\newtheorem{definition}[theorem]{Definition}
\newtheorem{remark}[theorem]{Remark}
\numberwithin{equation}{section}
\newcommand{\RR}{\mathbb{R}}
\newcommand{\NN}{\mathbb{N}}
\newcommand{\QQ}{\mathbb{Q}}
\newcommand{\ZZ}{\mathbb{Z}}
\newcommand{\cP}{\mathscr{P}}
\newcommand{\cO}{\mathcal{O}}
\newcommand{\sB}{\mathscr{B}}
\newcommand{\Bad}{\mathbf{Bad}}
\newcommand{\rr}{\mathbf{r}}
\newcommand{\bx}{\mathbf{x}}
\newcommand{\bs}{\mathbf{s}}
\newcommand{\bG}{\mathbf{G}}
\newcommand{\diag}{\mathrm{diag}}
\newcommand{\SL}{\mathrm{SL}}
\newcommand{\Ad}{\mathrm{Ad}}
\newcommand{\Res}{\mathrm{Res}}
\newcommand{\ggm}{G/\Gamma}
\newcommand\hs{homogeneous space}
\title[Bounded orbits on homogeneous spaces]{Bounded orbits of Diagonalizable Flows on finite volume quotients of products of $\SL_2(\RR)$}
\begin{document}
\author{Jinpeng An}
\address{LMAM, School of Mathematical Sciences, Peking University, Beijing, 100871, China}
\email{anjinpeng@gmail.com}
\thanks{An is supported by NSFC grant 11322101}
\author{Anish Ghosh}
\address{School of Mathematics, Tata Institute of Fundamental Research, Homi Bhabha Road, Colaba, Mumbai 400005, India}
\email{ghosh@math.tifr.res.in}
\thanks{Ghosh is supported by a UGC grant, a CEFIPRA grant and a DST Swarnajayanti fellowship}
\author{Lifan Guan}
\address{Department of Mathematics, University of York, Heslington, York, YO10 5DD, England \\
Current address: Georg-August Universit\"at G\"ottingen\\
Mathematisches Institut\\
Bunsenstrasse 3-5\\
D-37073 G\"ottingen, Germany}
\email{guanlifan@gmail.com}
\thanks{Guan was supported by EPSRC grant EP/J018260/1}
\author{Tue Ly}
\address{Department of Mathematics, Brandeis University, Waltham, MA 02453, USA}
\email{lntue.h@gmail.com}
\subjclass[2000]{11J83, 11K60, 11L07}

\begin{abstract}
We prove a number field analogue of a result of C. McMullen that the set of badly approximable numbers is absolute winning. We also prove a weighted version. We use this to prove an instance of a conjecture of An, Guan and Kleinbock \cite{AGK}. Namely, let $G := \SL_2(\RR) \times \dots \times \SL_2(\RR) $ and $\Gamma$ be a lattice in $G$. We show that the set of points on $G/\Gamma$ whose orbits under a one parameter Ad-semisimple subgroup of $G$ are bounded, form a \emph{hyperplane absolute winning} set.
\end{abstract}

\maketitle

\section{Introduction}
Let $G$ be a Lie group. We will say that $g \in G$ is $\Ad$-semisimple if $\Ad_g$ is diagonalizable over $\mathbb{C}$ and $\Ad$-diagonalizable if $\Ad_g$ is diagonalizable over $\RR$. We say that a one parameter subgroup $F$ is $\Ad$-semisimple (resp. $\Ad$-diagonalizable) if all the elements from $F$ are $\Ad$-semisimple (resp. $\Ad$-diagonalizable). In this paper, we prove the following theorem that verifies some cases of \cite[Conjecture 7.1]{AGK}:
\begin{theorem}\label{main thm}
Let $G = \SL_2(\RR) \times \dots \times \SL_{2}(\RR)$ be a finite product of copies of $\SL_{2}(\RR)$ and let $\Gamma$ be a lattice subgroup of $G$. Then for any one parameter $\Ad$-semisimple subgroup $F=\{g_t:t\in \RR\}$ of $G$, the set
\begin{equation*}
E(F):=\{x\in \ggm: Fx \text{ is bounded}\}
\end{equation*}
is Hyperplane Absolute Winning (HAW).
\end{theorem}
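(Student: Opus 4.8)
The plan is to reduce Theorem~\ref{main thm} to a purely Diophantine statement --- the number field analogue of W.~M.~Schmidt's conjecture promised in the abstract, asserting that countable intersections of weighted badly approximable sets over a number field are HAW --- by means of a Dani-type correspondence, and then to establish that Diophantine statement by transporting the Badziahin--Pollington--Velani construction to the number field setting. First I would carry out a chain of reductions, using that the HAW property is local (it holds for a subset of a manifold iff it holds in each chart of a countable atlas), is preserved by $C^1$ diffeomorphisms, passes to finite covers in both directions, is stable under countable intersections, and is stable under products of subsets of products of manifolds. Since replacing $\Gamma$ by a commensurable lattice changes $\ggm$ and the bounded-orbit set only up to finite covers and a $G$-equivariant diffeomorphism, and since after passing to a finite-index subgroup $\Gamma$ decomposes as a product of irreducible lattices in subproducts of the $\SL_2(\RR)$-factors, it suffices to treat an irreducible $\Gamma$. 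Writing $g_t=(g_t^{(1)},\dots,g_t^{(k)})$, up to conjugation each $g_t^{(i)}$ is either a diagonal flow $\diag(e^{a_it},e^{-a_it})$ with $a_i>0$, or elliptic, or trivial; elliptic and trivial factors keep the corresponding orbit component precompact and may be recorded by the weight $a_i=0$, and if $\ggm$ is compact or every factor is of this type then $E(F^+)=\ggm$ and we are done. So assume $\ggm$ is noncompact, $\Gamma$ irreducible, and the weight vector $\mathbf a=(a_1,\dots,a_k)$ nonzero.

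Next I would invoke the arithmetic structure. If $k=1$ this is the classical setting: a noncompact lattice in $\SL_2(\RR)$ has finitely many cusps, Dani's theorem identifies $E(F^+)$ with the badly approximable points near the cusps, and the HAW refinement follows from the (one-dimensional, absolute) winning property of $\Bad$ in $\RR$. If $k\ge 2$, Margulis arithmeticity applies, and a noncompact arithmetic lattice in $\SL_2(\RR)^k$ is --- because noncompactness forces $\QQ$-isotropy, hence a split quaternion algebra, and then the real places must be exactly the $k$ split real places --- commensurable to the Hilbert modular group $\SL_2(\cO_K)$ for a totally real field $K$ of degree $k$, diagonally embedded via its $k$ real places. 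In either case $\ggm$ has finitely many cusps, and a Dani correspondence holds: fixing a bounded fundamental neighbourhood of each cusp, the orbit $F^+x$ is bounded iff, in local coordinates transverse to each cusp, an associated vector lies in a weighted badly approximable set $\Bad(\mathbf a)\subseteq K\otimes\RR=\prod_{v\mid\infty}K_v$ cut out by the weights $a_i$ and the arithmetic of $\cO_K$. Hence $E(F^+)$ is, locally and up to $C^1$ diffeomorphism, a finite intersection over the cusps of such $\Bad$-sets; after the reductions above one is intersecting countably many weighted $\Bad$-sets with possibly varying weights.

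It then remains to prove that for any countable family of weights the intersection $\bigcap_n\Bad(\mathbf a_n)\subseteq K\otimes\RR$ is HAW. I would do this by transporting the Badziahin--Pollington--Velani construction, in An's potential-game formulation, to the $\cO_K$-setting: inside $\bigcap_n\Bad(\mathbf a_n)$ one builds a Cantor-type set via a nested tree of admissible boxes from which Player B's hyperplane neighbourhoods have been deleted, the survival of the tree being guaranteed by counting estimates for $\cO_K$-points of bounded height in small boxes of $K\otimes\RR$ that are \emph{uniform in the weights}. Feeding this into the closure properties of HAW recorded above (local character, $C^1$-invariance, countable intersections, finite covers, products) yields Theorem~\ref{main thm}.

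The hard part will be this last step: pushing the winning/HAW $\Bad$-construction through over a number field with constants uniform across weights, since it is precisely this uniformity (rather than the single-weight case) that is needed to handle several cusps and, via the reductions, arbitrary lattices. A secondary difficulty is making the Dani correspondence precise for the Hilbert modular quotient with a general $\Ad$-semisimple one-parameter subsemigroup, and matching the geometric escape-to-the-cusp data to the weights defining $\Bad(\mathbf a)$.
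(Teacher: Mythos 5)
Your proposal follows essentially the same route as the paper's proof: reduce to irreducible, noncompact lattices using commensurability invariance and the product decomposition of lattices; invoke Margulis arithmeticity (for $k\ge 2$) to reduce to $\Gamma$ commensurable with $\Res_{K/\QQ}\SL_2(\ZZ)$ for $K$ totally real; apply a Dani correspondence to reduce to showing a weighted $\Bad(K,\rr)$ set is HAW; and prove that via a BPV-type construction run through the hyperplane potential game. Two places where your plan diverges from, or is less efficient than, what the paper actually does are worth pointing out.

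First, you frame the Dani correspondence as a cusp-by-cusp analysis (``fixing a bounded fundamental neighbourhood of each cusp'' and getting ``a finite intersection over the cusps of such $\Bad$-sets''). The paper instead embeds $G/\Gamma$ properly into $\SL_{2d}(\RR)/\SL_{2d}(\ZZ)$ via the Weil restriction picture and invokes Mahler's compactness criterion there, obtaining a \emph{single} $\Bad(K,\rr)$ that captures boundedness on the whole horospherical slice $H$ at once, with no cusp bookkeeping (in particular the class number of $K$ never enters). One then passes from $H$ to all of $G/\Gamma$ not by tracking cusps but by the Bruhat decomposition $PH$ together with Borel density and the observation that $E(F_\rr^+)$ is $P$-invariant since $\Ad(F_\rr^+)$ is bounded on $P$; locally $E(F_\rr^+)$ is then a product $\Omega_P\times\big(\Bad(K,\rr)\cap\Omega_H\big)$ and the product lemma for HAW finishes the reduction. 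Your cusp-based route would also work but requires more case analysis.

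Second, the ``hard part'' you flag --- achieving constants uniform across countably many weight vectors --- is not actually needed. Once $F^+$ is fixed, after conjugating and discarding elliptic/trivial factors it determines a single weight vector $\rr$, and you only need to show the single set $\Bad(K,\rr)$ is HAW; the statement about countable intersections of $\Bad(K,\rr_n)$ is then a free consequence of the fact that HAW is closed under countable intersections, not an input. So the difficulty you anticipated dissolves, and the real technical content is exactly the potential-game argument you sketch, applied once for the one $\rr$ determined by $F^+$.
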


When  the subgroup $F$ is unbounded and the lattice $\Gamma$ is irreducible, the action of $F$  on the finite volume homogeneous space $G/\Gamma$ is ergodic and as a consequence, the set $E(F)$ has zero (Haar) measure. One consequence of the HAW property proved in Theorem \ref{main thm} is that it nevertheless is \emph{thick}, i.e. has full Hausdorff dimension at any point of the space. In fact, the HAW property is much richer and HAW sets exhibit many more interesting properties in addition to being thick.  The conjecture of An, Guan and Kleinbock predicts that $E(F)$ is HAW for $G$ any Lie group, $\Gamma$ any lattice in $G$ and $F$ any Ad-diagonalizable subgroup of $G$. In the same paper, this conjecture is verified for $G = \SL_3(\RR)$ and $\Gamma = \SL_3(\ZZ)$. This type of result goes back to the work of S. G. Dani \cite{Da2}, from whose work the winning property can be verified for real rank $1$ Lie groups. The AGK conjecture upgrades the winning property to HAW. As observed by Dani, the study of bounded orbits of diagonalizable flows on homogeneous spaces is intimately related to the study of \emph{badly approximable} numbers or matrices. This connection will also be important in the present work. In particular, along the way to proving the main theorem, we will prove (cf. Proposition \ref{main lemma} below) a number field analogue of C. McMullen's \cite{McM} result that badly approximable numbers form an absolute winning set, itself a strengthening of W. M. Schmidt's \cite{Sc3} result that the set of badly approximable numbers has full Hausdorff dimension. We note however, that in contrast to one dimensional badly approximable numbers, the number field case is richer and allows for the study of weighted approximation. In this sense, the number field case possesses some features of the high dimensional problem.  We believe this result to be of independent interest.

Following Dani's influential paper, there have been significant advances both in the understanding of bounded orbits of diagonalizable flows on homogeneous spaces, as well as in the study of badly approximable numbers and vectors. On the homogeneous side, we mention Margulis' conjecture, resolved by Kleinbock and Margulis \cite{KM}, and the work of  Kleinbock \cite{Kl}, Kleinbock-Weiss \cite{KW1, KW3} and An-Guan-Kleinbock \cite{AGK}. On the number theoretic side, we mention W. M. Schmidt's conjecture, resolved by Badziahin, Pollington and Velani \cite{BPV} and their subsequent strengthening in different contexts, by An \cite{An1, An2}, Beresnevich \cite{Be} and An, Beresnevich and Velani \cite{ABV}. We refer the reader to these works for the history of the problems as well as a more comprehensive list of results and references. Pertinent to the present work are the papers \cite{EGL} of Einsiedler, Ghosh and Lytle and \cite{KL} of Kleinbock and Ly where some special cases of Theorem \ref{main thm} were established, namely the cases
\begin{enumerate}
\item $G = \SL_{2}(\RR) \times \dots \times \SL_{2}(\RR), \Gamma = \SL_{2}(\cO_K)$ and $F = \{g_t~:~t \in \RR\}$ where
$$g_t := \left(\begin{pmatrix} e^{t} & 0\\0 & e^{-t}\end{pmatrix}, \dots, \begin{pmatrix} e^{t} & 0\\0 & e^{-t}\end{pmatrix} \right).$$
In \cite{EGL}, $E(F)$ was shown to be winning for Schmidt's game. In fact, a more general result, involving points in $C^1$ curves whose forward orbits are bounded, was proved. Subsequently in \cite{KL}, D. Kleinbock and the fourth named author proved a property stronger than HAW. In particular, this case of Theorem \ref{main thm} is due to Kleinbock and Ly.\\
\item In \cite{EGL}, the case of $K$ a real quadratic field, $G = \SL_{2}(\RR) \times \SL_{2}(\RR)$, $\Gamma = \SL_{2}(\cO_K)$ and $F = \{g_t~:~t \in \RR\}$ where
$$g_t := \left(\begin{pmatrix} e^{r_{\sigma_1}t} & 0\\0 & e^{-r_{\sigma_1}t}\end{pmatrix}, \begin{pmatrix} e^{r_{\sigma_2}t} & 0\\0 & e^{-r_{\sigma_2}t}\end{pmatrix} \right)$$
was also considered. Here $r_{\sigma_i} \geq 0$ and $r_{\sigma_1} +  r_{\sigma_2} = 1$. The corresponding $E(F)$ was shown to be winning for Schmidt's game.
\end{enumerate}
In \S \ref{S:2} we record preliminaries on the hyperplane absolute game and the hyperplane potential game. These are variants of the classical game introduced by W. M. Schmidt \cite{Sc1}. The subsequent two sections are devoted to the proof of a special case of Theorem \ref{main thm}, namely when $G$ equals product of $d$ copies of $\SL_2(\RR)$ and $\Gamma$ equals $\SL_{2}(\cO_K)\subset G$ where $K$ is a totally real field of degree $d$ over $\QQ$, $\cO_K$ is its ring of integers and $F$ is an arbitrary Ad-diagonalizable subgroup
of $G$. This particular case of our theorem is connected to Diophantine approximation of vectors in $\RR^d$ by rationals in the number field $K$. Indeed, this case is the generalisation of the result of \cite{EGL} in (2) above. This case forms the bulk of our paper and is intimately connected to the number field analogue of McMullen's result. We use a transference (``the Dani correspondence") to relate this case to the HAW property of certain vectors badly approximable by rationals in $K$ and prove this latter property. Finally we use the structure theory of Lie groups and Margulis arithmeticity theorem to conclude the proof of Theorem \ref{main thm}.
We conclude the introduction with some remarks:
\begin{enumerate}
\item It is plausible that the method of proof developed in the present paper can be used to deal with the case where $G$ consists of products of $\SL_2(\RR)$ and $\SL_2(\mathbb{C})$. Indeed the main argument would then be carried out with an arbitrary number field rather than a totally real number field.
\item Proposition \ref{main lemma} below, can be formulated for arbitrary number fields rather than just totally real ones. The proof is identical to the one presented here; we have restricted ourselves to totally real fields for notational ease.
\item In Theorem 4.2 in his thesis \cite{Ly}, the last named author proved a more general version of Proposition 3.5 below. Specifically, the notion of winning used is slightly more general and a higher dimensional analogue of $\Bad(K,\rr)$ (defined below) is considered. This result can be used to verify  \cite[Conjecture 7.1]{AGK} in some more cases, namely for certain Ad-semisimple one parameter flows on some special quotients of products of $\SL_n(\RR)$. Simultaneously and independently, the first three named authors established Theorem \ref{main thm}. We then agreed to join hands in this paper.
\end{enumerate}

\subsection*{Acknowledgements} This work was initiated during a visit by Ghosh to Peking University. He is very grateful to the host for the invitation and the hospitality. Subsequent progress was made during a visit by the first three authors to Oberwolfach. We would like to thank the MFO for the excellent working conditions and V. Beresnevich and S. Velani for the invitation. We thank the referee for several helpful suggestions which have improved the paper.

\section{Preliminaries on Schmidt games}\label{S:2}

In this section, we will recall definitions of certain recent variants of Schmidt games, namely, the hyperplane absolute game and the hyperplane potential game. We follow the exposition in \cite{AGK}. They are both variants of the $(\alpha,\beta)$-game introduced by Schmidt in \cite{Sc1}. Since we do not make a direct use of the $(\alpha,\beta)$-game in this paper, we omit its definition here and refer the interested reader to \cite{Sc1,S2}. Instead, we list here some nice properties of the $\alpha$-winning sets:
\begin{enumerate}
\item If the game is played on a Riemannian manifold, then any $\alpha$-winning set is thick.
\item The intersection of countably many $\alpha$-winning sets is $\alpha$-winning.
\end{enumerate}

\subsection{Hyperplane absolute game}
The hyperplane absolute game was introduced in \cite{BFKRW}. It is played on a Euclidean space $\RR^{d}$.
Given a hyperplane $L$ and a $\delta>0$, we denote by $L^{(\delta)}$ the $\delta$-neighborhood of $L$, i.e.,
$$L^{(\delta)}:=\{\bx\in\RR^{d}:\mathrm{dist}(\bx,L)<\delta\}.$$ For $\beta\in(0,\frac{1}{3})$, the
\emph{$\beta$-hyperplane absolute game} is defined as follows. Bob starts by choosing a
closed ball $B_0\subset \RR^d$ of radius $\rho_0$. In the $i$-th turn, Bob chooses a closed ball $B_i$ with radius $\rho_i$, and then
Alice chooses a  hyperplane neighborhood $L_i^{(\delta_i)}$ with $\delta_i\le\beta \rho_i$. Then in the $(i+1)$-th turn, Bob chooses a closed ball $B_{i+1}\subset B_i\setminus L_i^{(\delta_i)}$ of radius $\rho_{i+1}
\geq\beta \rho_{i}$. By this process there is a nested sequence of closed balls
$$B_0\supseteq B_1\supseteq B_2\supseteq \cdots.$$
We say that a subset $S\subset \RR^{d}$ is \emph{$\beta$-hyperplane absolute
winning} (\emph{$\beta$-HAW} for short) if no matter how Bob plays, Alice can ensure that
$$\bigcap_{i=0}^\infty B_i\cap S\neq \emptyset.$$ We say $S$ is \emph{hyperplane absolute winning}
(\emph{HAW} for short) if it is $\beta$-HAW for any $\beta\in(0,\frac{1}{3})$.

We have the following lemma collecting the basic properties of $\beta$-HAW subsets and HAW subsets of $\RR^d$ (\cite{BFKRW}, \cite{KW3}):
\begin{lemma}\label{haw-property}
\begin{enumerate}
\item A HAW subset is always $\frac{1}{2}$-winning.
\item Given $\beta,\beta'\in(0,\frac{1}{3})$, if $\beta\geq\beta'$, then any $\beta'$-HAW set is $\beta$-HAW.
\item A countable intersection of HAW sets is again HAW.
\item Let $\varphi:\RR^{d}\to \RR^{d}$ be a $C^1$ diffeomorphism. If $S$ is a HAW set, then so is
$\varphi(S)$.
\end{enumerate}
\end{lemma}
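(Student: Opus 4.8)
We outline a plan of proof; all four assertions are due to \cite{BFKRW} and \cite{KW3}, but it is worth recording the strategy, since the first three are soft game-theoretic manipulations and only the fourth requires genuine geometric input. \emph{Parts (1) and (2).} I would prove both by ``strategy transfer''. For (2), assume $S$ is $\beta'$-HAW with $\beta'\le\beta$ and let Bob play the $\beta$-game; Alice will read the play as a play of the $\beta'$-game. Every ball Bob is permitted to choose has radius at least $\beta\rho_i\ge\beta'\rho_i$, so it is a legal Bob move in the $\beta'$-game, and every neighbourhood $L_i^{(\delta_i)}$ prescribed by Alice's $\beta'$-winning strategy has $\delta_i\le\beta'\rho_i\le\beta\rho_i$, so it is a legal Alice move in the $\beta$-game; following the $\beta'$-strategy, Alice wins. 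For (1) I would run the analogous simulation of a $\beta_0$-hyperplane absolute game (with $\beta_0$ small, which is allowed since $S$ is HAW) inside Schmidt's $(\alpha,\beta)$-game for a fixed $\alpha<\tfrac12$: when Bob presents a ball $B$ of radius $\rho$, Alice asks her hyperplane-absolute strategy for its move $L^{(\delta)}$ with $\delta\le\beta_0\rho$, and responds in the Schmidt game with a ball of radius $\alpha\rho$ inside $B\setminus L^{(\delta)}$, which is possible because that set always contains a ball of radius $\tfrac12(1-\beta_0)\rho>\alpha\rho$ once $\beta_0<1-2\alpha$; taking $\beta_0$ also smaller than the relevant multiple of $\beta$ makes Bob's next Schmidt move a legal move in the simulated absolute game, so Alice wins. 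This shows $S$ is $\alpha$-winning for every $\alpha<\tfrac12$, which is the $\tfrac12$-winning property asserted.

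\emph{Part (3).} Given HAW sets $S_1,S_2,\dots$ and $\beta\in(0,\tfrac13)$, I would split the turns of the $\beta$-game into a set of ``dummy'' turns (say the odd-numbered turns) together with infinite blocks $T_1,T_2,\dots$ of bounded gaps (a dyadic partition of the even-numbered turns, so that $T_k$ has gap at most $g_k:=2^{k+1}$). On a dummy turn Alice removes the $\beta\rho$-neighbourhood of a hyperplane through the centre of Bob's current ball; since $\beta<\tfrac13$ this forces the next radius below $\tfrac12\rho$, so the nested intersection $\bigcap_iB_i$ is a single point $x$. On the turns of $T_k$ Alice plays a winning strategy for $S_k$ in the $\beta_k$-hyperplane absolute game, where $\beta_k:=\beta^{g_k}$; such a strategy exists because $S_k$ is HAW, each of its moves is legal in the $\beta$-game because $\beta_k\le\beta$, and the play restricted to $T_k$ is a legal play of the $\beta_k$-game for $S_k$ — nestedness is automatic and the radius drops by a factor of at least $\beta^{g_k}=\beta_k$ over each gap. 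Hence $x\in S_k$ for every $k$, so $x\in\bigcap_kS_k$ and Alice has won the $\beta$-game; as $\beta$ was arbitrary, $\bigcap_kS_k$ is HAW.

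\emph{Part (4).} This is the substantive assertion, and I would follow the $C^1$-invariance argument of \cite{BFKRW} and \cite{KW3}. Playing the $\beta$-game for $\varphi(S)$, Alice maintains a ``shadow'' $\beta'$-hyperplane absolute game for $S$ in the domain, with $\beta'$ small. Each of Bob's balls $B_i$ is pulled back through $\varphi^{-1}$ — a set squeezed between two concentric balls of comparable radii, since $\varphi$ is bi-Lipschitz on the compact set $\overline{B_0}$ in which all play takes place — and, after inscribing an honest ball, handed to her winning strategy for $S$; the hyperplane neighbourhood that strategy returns is pushed forward through $\varphi$ and, using that $\varphi$ sends an affine hyperplane to a $C^1$ hypersurface lying inside the $o(r)$-neighbourhood of its tangent plane in any ball of radius $r$, is enlarged to an affine hyperplane neighbourhood, which Alice plays. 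Since $S$ is HAW it is in particular $\beta'$-HAW, so the shadow game is won and the resulting point of $\bigcap_iB_i$ lies in $\varphi(S)$. The hard part — and the only real obstacle in the lemma — is making this rigorous: Alice must interleave dummy moves so that Bob's radii tend to $0$, and must check that the pulled-back play is a \emph{legal} play of the absolute game for $S$, with genuinely nested balls obeying the prescribed radius ratios, which is delicate precisely because $\varphi$ does not preserve balls. It is here that the smallness of $\beta'$, relative to $\beta$ and to the bi-Lipschitz and $C^1$ moduli of $\varphi$ on $\overline{B_0}$, is used to absorb all the geometric losses.
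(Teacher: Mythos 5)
The paper does not supply its own proof of this lemma: it states the four properties and refers to \cite{BFKRW} and \cite{KW3}. Your plan for parts (2), (3), and (4) is consistent with how those references argue. Part (2) is the straightforward strategy transfer; part (3) is the standard ``interleave dummy shrinking moves with a dyadic schedule of simulations'' argument, and you correctly arrange the dummy moves so that $\bigcap_i B_i$ is a single point (necessary, since each simulated game only gives $\bigcap_i B_i\cap S_k\ne\emptyset$); part (4) you present as an acknowledged sketch of the $C^1$-invariance argument, which is fair.

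Part (1), however, has a genuine gap. Your computation — that $B\setminus L^{(\delta)}$ with $\delta\le\beta_0\rho$ contains a ball of radius $\tfrac12(1-\beta_0)\rho$, which exceeds $\alpha\rho$ once $\beta_0<1-2\alpha$ — requires $\alpha<\tfrac12$ strictly; at $\alpha=\tfrac12$ the condition becomes $\beta_0<0$. You then assert that $\alpha$-winning for every $\alpha<\tfrac12$ ``is the $\tfrac12$-winning property asserted,'' but it is not: by Schmidt's monotonicity lemma, being $\alpha$-winning is a \emph{stronger} property for larger $\alpha$, so $\tfrac12$-winning does not follow from $\alpha$-winning for all $\alpha<\tfrac12$. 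The actual argument in \cite{BFKRW} obtains $\alpha=\tfrac12$ by a one-round delay: when Alice's absolute strategy asks her to avoid $L^{(\delta)}$, she places her Schmidt ball of radius exactly $\tfrac12\rho$ tangent to $L$ and entirely on the side of $L$ away from the thicker portion of $L^{(\delta)}$, so that the overlap of her ball with $L^{(\delta)}$ is a thin cap of depth at most $\delta$; she then escapes $L^{(\delta)}$ on the \emph{next} Alice move, after Bob's reply, and arranges $\beta_0$ small (depending on Bob's $\beta$) so that this delayed escape still produces a legal, nested Bob-move in the simulated absolute game. Without this delay trick you only obtain the weaker ``$\alpha$-winning for all $\alpha<\tfrac12$.''
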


The notion of HAW was extended to subsets of $C^1$ manifolds in \cite{KW3}. This is done in two steps. First, one defines the hyperplane absolute game on an open subset $W\subset \RR^d$. It is defined just as the hyperplane absolute game on $\RR^d$, except for requiring that Bob's first move $B_0$ be contained in $W$. Now, let $M$ be a $d$-dimensional $C^1$ manifold, and let $\{(U_{\alpha},\phi_{\alpha})\}$ be a $C^1$ atlas on $M$. A subset $S\subset M$ is said to be HAW on $M$ if for each $\alpha$, $\phi_{\alpha}(S\cap U_{\alpha})$ is HAW on $\phi_{\alpha}(U_{\alpha})$. The definition is independent of the choice of atlas by the property (4) listed above. We have the following lemma that collects the basic properties of HAW subsets of a $C^1$ manifold (cf. \cite{KW3}).

\begin{lemma}\label{haw-manifold}

\begin{enumerate}
\item HAW subsets of a $C^1$ manifold are thick.
\item A countable intersection of HAW subsets of a $C^1$ manifold is again HAW.
\item Let $\phi:M\rightarrow N$ be a diffeomorphism between $C^1$ manifolds, and let $S\subset M$ be a HAW subset of $M$. Then $\phi(S)$ is a HAW subset of $N$.
\item Let $M$ be a $C^1$ manifold with an open cover $\{U_{\alpha}\}$. Then, a subset $S\subset M$ is HAW on $M$ if and only if $S\cap U_{\alpha}$ is HAW on $U_{\alpha}$ for each $\alpha$.
\item Let $M_1,M_2$ be $C^1$ manifolds, and let $S_i\subset M_i \ (i=1,2)$ be  HAW subsets of $M_i$. Then $S_1\times S_2$ is a HAW subset of $M_1\times M_2$.
\end{enumerate}
\end{lemma}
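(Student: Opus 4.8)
The plan is to reduce all five statements to Lemma~\ref{haw-property} together with two elementary facts about the $\beta$-hyperplane absolute game played on an open set $W\subseteq\RR^{d}$, both proved exactly as in \cite{BFKRW,KW3}: (i) (\emph{restriction}) if $S$ is HAW on $W$ and $W'\subseteq W$ is open, then $S\cap W'$ is HAW on $W'$; and (ii) (\emph{local-to-global}) if $W=\bigcup_{j}W_{j}$ with each $W_{j}$ open and $S\cap W_{j}$ HAW on $W_{j}$ for all $j$, then $S$ is HAW on $W$. Fact (i) is immediate, since a legal opening ball $B_{0}\subseteq W'$ is also a legal opening ball for the game on $W$, so Alice reuses her strategy. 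Fact (ii) is the only point requiring a genuine strategy argument, and I regard it as the crux of the whole lemma: Alice first plays ``shrinking'' moves, for instance a hyperplane through the centre of Bob's current ball, which forces the next radius below $\tfrac{1}{2}$ the current one (here $\beta<\tfrac{1}{3}$ guarantees Bob still has a legal reply), until the balls have shrunk so that $B_{i}\subseteq W_{j_{0}}$ for some $j_{0}$. This happens after finitely many rounds, because playing shrinking moves forever would force $\bigcap_{i}B_{i}$ to be a single point of $W$, which lies in some open $W_{j_{0}}$, whence $B_{i}\subseteq W_{j_{0}}$ for $i$ large. From that round on she switches to her winning strategy for ``$S\cap W_{j_{0}}$ is HAW on $W_{j_{0}}$'', treating the current $B_{i}$ as Bob's opening move there; this yields $\bigcap_{i}B_{i}\cap S\supseteq\bigcap_{i}B_{i}\cap(S\cap W_{j_{0}})\neq\emptyset$. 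Combining (i), (ii) and Lemma~\ref{haw-property}(4) one obtains the \emph{atlas-independence} of the definition of HAW on a manifold, which we use freely below.

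Granting this, parts (1)--(3) are routine. For (1): for a chart $(U_{\alpha},\phi_{\alpha})$, the set $\phi_{\alpha}(S\cap U_{\alpha})$ is HAW on the open set $\phi_{\alpha}(U_{\alpha})$, hence $\tfrac{1}{2}$-winning there (Lemma~\ref{haw-property}(1)), hence thick; as $\phi_{\alpha}$ is locally bi-Lipschitz it preserves Hausdorff dimension, so $S\cap U_{\alpha}$ is thick in $U_{\alpha}$, and since thickness is local and the charts cover $M$, $S$ is thick in $M$. For (2): describe all the $S_{n}$ relative to one atlas; on each chart $\phi_{\alpha}\big((\bigcap_{n}S_{n})\cap U_{\alpha}\big)=\bigcap_{n}\phi_{\alpha}(S_{n}\cap U_{\alpha})$ is a countable intersection of HAW subsets of $\phi_{\alpha}(U_{\alpha})$, hence HAW by the open-set form of Lemma~\ref{haw-property}(3) (Alice interleaves the strategies in the usual way). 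For (3): pull the atlas $\{(V_{\gamma},\psi_{\gamma})\}$ of $N$ back to the atlas $\{(\phi^{-1}(V_{\gamma}),\psi_{\gamma}\circ\phi)\}$ of $M$; then $\psi_{\gamma}(\phi(S)\cap V_{\gamma})=(\psi_{\gamma}\circ\phi)(S\cap\phi^{-1}(V_{\gamma}))$ is HAW on $\psi_{\gamma}(V_{\gamma})$ because $S$ is HAW on $M$, so $\phi(S)$ is HAW on $N$.

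Part (4) follows from (i), (ii) and atlas-independence. The forward implication comes from restricting a chart-atlas of $M$ to $U_{\alpha}$ and applying (i). For the converse, fix a chart-atlas $\{(W_{\gamma},\psi_{\gamma})\}$ of $M$; for each $\gamma$, fact (i) gives that $S\cap U_{\alpha}\cap W_{\gamma}$ is HAW on $U_{\alpha}\cap W_{\gamma}$ for every $\alpha$, hence $\psi_{\gamma}(S\cap U_{\alpha}\cap W_{\gamma})$ is HAW on $\psi_{\gamma}(U_{\alpha}\cap W_{\gamma})$; since $\{\psi_{\gamma}(U_{\alpha}\cap W_{\gamma})\}_{\alpha}$ is an open cover of $\psi_{\gamma}(W_{\gamma})$, fact (ii) yields that $\psi_{\gamma}(S\cap W_{\gamma})$ is HAW on $\psi_{\gamma}(W_{\gamma})$, as required.

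Finally, for (5), cover $M_{1}\times M_{2}$ by the product charts $(U_{\alpha}\times V_{\beta},\,\phi_{\alpha}\times\psi_{\beta})$, so by (4) it suffices to prove that the product of two HAW subsets of (open subsets of) Euclidean spaces is HAW in the product. This is the Euclidean product property recorded in \cite{KW3}; the mechanism is that the $\delta$-neighbourhood of an affine hyperplane $L\subseteq\RR^{d_{1}}$ lifts to the $\delta$-neighbourhood of the hyperplane $L\times\RR^{d_{2}}\subseteq\RR^{d_{1}+d_{2}}$ (and symmetrically for the second factor), so Alice runs the two factor games simultaneously, acting alternately in each and ``passing'' in the other by placing her hyperplane neighbourhood off Bob's current ball, the projections $\pi_{1}(B_{i}),\pi_{2}(B_{i})$ furnishing legal Bob-moves in the respective factor games. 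The only subtlety is that a bounded run of passes between Alice's active rounds turns a factor game into a $\beta'$-HAW game for some $\beta'<\tfrac{1}{3}$ depending on $\beta$, which is still won since the factor set, being HAW, is $\beta'$-HAW by Lemma~\ref{haw-property}(2).
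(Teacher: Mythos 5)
Your proposal is correct, and for part (5) --- the only part the paper actually proves, citing \cite{KW3} for (1)--(4) --- it is essentially the same argument: reduce via product charts to the Euclidean case, then have Alice interleave the two factor strategies, playing hyperplanes of the form $L\times\RR^{d_2}$ on even turns and $\RR^{d_1}\times L'$ on odd turns, with the projected balls $\pi_1(B_{2i}),\pi_2(B_{2i})$ serving as Bob's moves in the factor games. The paper makes the ``bounded run of passes'' quantitative by noting that alternation converts a $\beta$-HAW product game into $\beta^2$-HAW factor games (which is why $\beta^2$-HAW of the factors suffices, and Lemma \ref{haw-property}(2) then gives full HAW); your looser phrasing ``some $\beta'<\tfrac13$ depending on $\beta$, handled by Lemma \ref{haw-property}(2)'' records the same mechanism. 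The one place to tighten is the description of ``passing by placing her hyperplane neighbourhood off Bob's current ball'' --- in the product game Alice chooses a single hyperplane neighbourhood per turn, and the pass in the idle factor is built into the choice $L\times\RR^{d_2}$ (it imposes no constraint on the second coordinate), rather than being a separate move; your intent is clear, but the phrasing could mislead. Your write-up of facts (i)--(ii) and of parts (1)--(4) is a sound reconstruction of what the paper defers to \cite{KW3}, and in particular your local-to-global argument (Alice bisects until the ball falls inside one chart) is exactly the standard one.
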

\begin{proof}
  Indeed, everything except (5) is proved in \cite{KW3}. So we provide a proof of (5) here. According to \cite[Lemma 2.2(v)]{AGK}, both the set $S_1\times M_2$ and $S_2 \times M_1$ are HAW. Thus the set $S_1\times S_2$ is HAW by (2).
\end{proof}

\subsection{Hyperplane potential game}

The hyperplane potential game was introduced in \cite{FSU} and also defines a class of subsets of $\RR^{d}$
called \emph{hyperplane potential winning} (\emph{HPW} for short) sets. The following lemma allows one to prove
the HAW property of a set $S\subset \RR^d$ by showing that it is winning for the hyperplane potential game. And this is exactly the game we will use in this paper.
\begin{lemma}\emph{(cf. \cite[Theorem C.8]{FSU})}\label{HPW}
A subset $S$ of $\RR^{d}$ is HPW if and only if it is HAW.
\end{lemma}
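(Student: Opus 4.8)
The plan is to establish the two inclusions separately. The inclusion $\mathrm{HAW}\subseteq\mathrm{HPW}$ is the soft one: here I would observe that the hyperplane potential game is, turn for turn, at least as favourable to Alice as the hyperplane absolute game. Indeed, a single hyperplane neighbourhood $L^{(\delta)}$ with $\delta\le\beta\rho$ is a legal move in the $(\beta,\gamma)$-potential game for every $\gamma>0$---it is a one-term family and $\delta^{\gamma}\le(\beta\rho)^{\gamma}$---while in the potential game Bob is required to avoid \emph{all} of Alice's previously chosen neighbourhoods, not merely the most recent one. Hence a $\beta$-absolute winning strategy for Alice, reinterpreted as a potential strategy in which she only ever plays one-element families, is still winning. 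Matching up the parameter ranges (the absolute game is defined for $\beta\in(0,1/3)$, the potential game for $\beta\in(0,1)$ and $\gamma>0$) is routine, using the monotonicity in Lemma \ref{haw-property}(2). So every HAW set is HPW.

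For the reverse inclusion $\mathrm{HPW}\subseteq\mathrm{HAW}$ I would fix $\beta\in(0,1/3)$ and try to convert a $(\beta,\gamma)$-potential winning strategy for $S$ (which exists for every $\gamma$, since $S$ is HPW) into a $\beta$-absolute winning strategy; as $\beta$ is arbitrary this gives that $S$ is HAW. The idea is to run a virtual copy of the potential game alongside the absolute game and have Alice, in the absolute game, ``implement'' the virtual strategy. The obstruction is that at a virtual turn the virtual strategy may instruct Alice to delete an entire countable $\ell^{\gamma}$-summable family of hyperplane neighbourhoods, whereas in the absolute game she may delete only one neighbourhood per turn; correspondingly, in the virtual game Bob must avoid the whole accumulated family, while the real Bob only dodges the single neighbourhood Alice just played. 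Bridging this gap is where the geometry enters: Alice schedules her deletions carefully so that each committed neighbourhood is removed while it is still admissible, exploiting that Bob's balls are nested and that his radii contract at a controlled rate, and she uses the accumulated deletions to keep the virtual Bob's moves consistent with a legitimate virtual play. Once every committed neighbourhood has eventually been deleted, nestedness forces the outcome point $x_{\infty}$ to lie outside all of them, so the virtual record is that of a genuine potential play which Bob lost, and the virtual strategy's guarantee yields $x_{\infty}\in S$.

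I expect this scheduling/translation step in the second inclusion to be the main obstacle: one must balance the per-turn summability budget on Alice's families against the rate at which Bob may shrink his balls, and verify in an online fashion that no neighbourhood's window of admissibility is ever missed. This is precisely the technical content of \cite[Theorem C.8]{FSU}, and I would carry it out by following that argument rather than reproving it from scratch.
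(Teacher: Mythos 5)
The paper offers no proof of this lemma at all: it is stated with a ``cf.\ \cite[Theorem C.8]{FSU}'' and used as a black box, so there is no ``paper's own argument'' to compare your sketch against, and deferring to \cite{FSU} for the genuinely technical direction is exactly what the paper does.

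That said, your sketch of the easy inclusion contains a concrete misreading of the hyperplane potential game. You assert that ``in the potential game Bob is required to avoid \emph{all} of Alice's previously chosen neighbourhoods.'' This is false: in the $(\beta,\gamma)$-potential game Bob's only constraints are $B_{i+1}\subset B_i$ and $\rho_{i+1}\ge\beta\rho_i$; he is entirely free to place $B_{i+1}$ inside any of Alice's chosen hyperplane neighbourhoods. Alice's protection comes from the \emph{win condition} instead: she wins if the outcome point lies in $S$ \emph{or} in $\bigcup_{i,k}L_{i,k}^{(\delta_{i,k})}$. Consequently, your claim that a $\beta$-HAW strategy ``reinterpreted as a potential strategy in which she only ever plays one-element families, is still winning'' does not follow immediately: Bob's moves in the potential game need not be legal moves in the virtual absolute game, since he may refuse to dodge the deleted neighbourhood. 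One must add the (standard, but not trivial) argument that Alice plays the HAW strategy as long as Bob's balls stay clear of her past neighbourhoods, and that if Bob ever strays into one, then either his ball is eventually swallowed by that neighbourhood (so $x_\infty$ lies in it and Alice wins outright) or he eventually exits it again, at which point Alice resumes the HAW strategy; in either case the nesting and the lower bound $\rho_{i+1}\ge\beta\rho_i$ keep things under control. Without this repair the ``soft'' inclusion is not actually established by your argument. For the hard inclusion $\mathrm{HPW}\subseteq\mathrm{HAW}$, your high-level description of the scheduling/simulation idea is reasonable and, as you say, the details are exactly the content of \cite[Theorem C.8]{FSU}, which is what the paper itself invokes.
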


The hyperplane potential game involves two parameters $\beta\in(0,1)$ and $\gamma>0$.  Bob starts the game
 by choosing a closed ball $B_0\subset \RR^{d}$ of radius $\rho_0$. In the $i$-th turn,
Bob chooses a closed ball $B_i$ of radius $\rho_i$, and then Alice chooses a countable family of hyperplane
neighborhoods $\{L_{i,k}^{(\delta_{i,k})}: k\in \NN\}$ such that
\begin{equation*}
\sum_{k=1}^\infty \delta_{i,k}^\gamma\le(\beta \rho_{i})^\gamma.
\end{equation*}
Then in the $(i+1)$-th turn, Bob chooses a closed ball $B_{i+1}\subset B_i$ of radius $\rho_{i+1}\ge\beta \rho_{i}$. By
this process there is a nested sequence of closed balls
$$B_0\supseteq B_1\supseteq B_2\supseteq \cdots.$$
We say a subset $S\subset \RR^{d}$ is \emph{$(\beta,\gamma)$-hyperplane potential winning}
(\emph{$(\beta,\gamma)$-HPW} for short) if no matter how Bob plays, Alice can ensure that
$$\bigcap_{i=0}^\infty B_i\cap\Big(S\cup\bigcup_{i=0}^\infty\bigcup_{k=1}^\infty
L_{i,k}^{(\delta_{i,k})}\Big)\ne\emptyset.$$ We say $S$ is \emph{hyperplane potential winning}
(\emph{HPW} for short) if it is $(\beta,\gamma)$-HPW for any $\beta\in(0,1)$ and $\gamma>0$.

\section{a special case}
This and the next section are devoted to prove a special case of Theorem \ref{main thm}. We begin by introducing some notation. Let $K$ be a totally real field of degree $d$ over $\QQ$, $\cO_K$ its ring of integers, and $S$ be the set of field embeddings $K\hookrightarrow\RR$. Then we have $|S|=d.$ Set
\begin{equation*}
\theta: K\to \prod_{\sigma\in S} \RR, \quad \theta(p)=(\sigma(p))_{\sigma\in S}.
\end{equation*}
Let $\Res_{K/\QQ}$ denote Weil's restriction of scalar's functor. It is well known (\cite[Theorem 7.8]{BH}) that the group $\Res_{K/\QQ}\SL_2(\ZZ)$
is a lattice in  $\Res_{K/\QQ}\SL_2(\RR)$. The latter coincides with the product of $d$ copies of $\SL_2(\RR)$. For simplicity, in this section and the next, we set
$$G=\Res_{K/\QQ}\SL_2(\RR)=\prod_{\sigma\in S}\SL_2(\RR), \quad \Gamma=\Res_{K/\QQ}\SL_2(\ZZ).$$
It follows from the definition that the subgroup $\Res_{K/\QQ}\SL_2(\ZZ)$ coincides with the subgroup $\theta(\SL_2(\cO_K))$, where $\theta$ is the map defined by $\theta(g)=(\sigma(g))_{\sigma\in S}$.
Now we are ready to state the following special case of Theorem \ref{main thm}.

\begin{proposition}\label{main prop}
Let $\rr\in \RR^d$ be a real vector with $r_{\sigma}\ge 0$ for $\sigma\in S$ and $\sum_{\sigma\in S}r_{\sigma}=1$, set
\begin{equation}
g_{\rr}(t):=  \left(\begin{pmatrix}e^{r_{\sigma}t}&0\\0&e^{-r_{\sigma}t}\end{pmatrix}\right)_{\sigma\in S}
\end{equation}
and $F_{\rr}=\{g_{\rr}(t):t\in \RR\}$, then the set
\begin{equation*}
E(F_{\rr}):=\{x\in \ggm: F_{\rr}x \text{ is bounded }\}
\end{equation*}
is HAW.
\end{proposition}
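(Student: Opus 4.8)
The plan is to reduce the statement about bounded orbits on $G/\Gamma$ to a statement about vectors badly approximable by rationals in $K$, prove the HAW property in that Diophantine setting using the hyperplane potential game (Lemma \ref{HPW}), and then transfer back. First I would set up the "Dani correspondence" adapted to the number field situation: parametrize a transversal to $F_{\rr}^+$ through the expanding horospherical direction by the unipotent subgroup $\{u_{\bx}=(u_{x_\sigma})_{\sigma\in S} : \bx\in\RR^d\}$, where $u_s=\begin{pmatrix}1&s\\0&1\end{pmatrix}$, and observe that, via the embedding $\theta$, the orbit $F_{\rr}^+ u_{\theta(p)q^{-1}}\Gamma$ (for $p/q$ a "rational point in $K$") returns deep into the cusp precisely when $\prod_\sigma|q\sigma(x)-\sigma(p)|$ is small relative to a power of $|q|_\infty:=\prod_\sigma|\sigma(q)|$ weighted by the $r_\sigma$. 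Making this quantitative via Mahler's criterion, $F_{\rr}^+ u_{\bx}\Gamma$ is bounded if and only if $\bx$ lies in the set $\Bad(K,\rr)$ of vectors $\bx\in\RR^d$ for which there exists $c>0$ with $\max_\sigma\big(|q|_\infty^{r_\sigma}|\sigma(q)x_\sigma-\sigma(p)|\big)\ge c\,|q|_\infty^{-1}\,(\text{or similar normalization})$ for all $p,q\in\cO_K$ with $q\ne0$ — i.e. the number field analogue of weighted badly approximable vectors.

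The core of the argument is then Proposition \ref{main lemma} (stated later in the paper), the number field analogue of Schmidt's conjecture, whose relevant consequence here is that $\Bad(K,\rr)$ is HAW on $\RR^d$ for each admissible weight $\rr$. I would prove this by playing the hyperplane potential game: given Bob's ball $B_i$ of radius $\rho_i$, Alice must neutralize the "dangerous" rationals $p/q$ whose approximation sets $\Delta(p,q):=\{\bx : \max_\sigma(|q|_\infty^{r_\sigma}|\sigma(q)x_\sigma-\sigma(p)|)< c|q|_\infty^{-1}\}$ meet $B_i$; the point is that each such $\Delta(p,q)$ is contained in a bounded neighborhood of a hyperplane (the directions $x_\sigma$ for which $r_\sigma=0$ are unconstrained, and the $\Delta(p,q)$ is a box that is thin in the complementary directions), and a counting/volume estimate — using that the number of $q\in\cO_K$ with $|q|_\infty$ in a dyadic range and $\sigma(q)$ in prescribed intervals is controlled, via Minkowski's theorem on the lattice $\theta(\cO_K)$ — bounds $\sum\delta_{i,k}^\gamma$ by $(\beta\rho_i)^\gamma$ once $c$ is chosen small depending on $\beta,\gamma$ and the field $K$. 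This is the step I expect to be the main obstacle: getting the potential-game inequality to close requires a clean simultaneous control of (a) the geometry of $\Delta(p,q)$ as a hyperplane neighborhood when some weights vanish, and (b) the count of relevant $(p,q)$ at each scale, uniformly in $\rr$, and this is exactly where the number-field input (units, the shape of the lattice $\theta(\cO_K)$, and the product-of-absolute-values normalization) enters in an essential way beyond the classical $K=\QQ$ case of \cite{BPV}.

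Finally I would assemble the pieces: by Lemma \ref{HPW}, $\Bad(K,\rr)$ being HPW gives that it is HAW on $\RR^d$; the Dani correspondence identifies $E(F_{\rr}^+)$ locally, on the chart given by the horospherical coordinate $u_{\bx}$, with (a $C^1$ image of) $\Bad(K,\rr)$ times the bounded-orbit set in the remaining neutral and contracting directions, the latter being trivially HAW or handled by Lemma \ref{haw-manifold} (5). Using that the map $x\mapsto u_{\bx}\Gamma$ is a $C^1$ diffeomorphism onto an open subset of $G/\Gamma$, together with Lemma \ref{haw-manifold} (3) and (4) to pass from the local charts covering $G/\Gamma$ to the whole space, and Lemma \ref{haw-property}/\ref{haw-manifold} to handle invariance under the flow-direction and under the $\Gamma$-action, yields that $E(F_{\rr}^+)$ is HAW on $G/\Gamma$. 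The bookkeeping in this last step is routine given the structure of $G=\prod_\sigma\SL_2(\RR)$, the only care being to check that the neutral directions (coming from $r_\sigma=0$) do not obstruct boundedness, which follows because along those factors $g_{\rr}(t)$ acts trivially.
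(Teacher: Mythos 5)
Your outline has the right shape — Dani correspondence, reduce to HAW of $\Bad(K,\rr)$ via the hyperplane potential game, assemble locally — but the mechanism you propose for the crucial step (Proposition~\ref{main lemma}) is not the one that works, and you've essentially left the hard part as a black box with the wrong label on it. You suggest controlling $\sum_k\delta_{i,k}^\gamma$ by \emph{counting}, via Minkowski's theorem, the number of $q\in\cO_K$ with $|q|_\infty$ in a dyadic range whose approximation boxes meet $B_i$. That kind of volume/count estimate does not close the potential-game inequality: even in the classical $K=\QQ$, $d=2$ case this is precisely the approach that does not suffice and is why Schmidt's conjecture resisted proof until \cite{BPV}. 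What actually makes the argument work here is a \emph{rigidity} statement, not a counting bound: after partitioning $\cO_K(\rr,\varepsilon)$ into classes $\cP_{n+k,k}$ by the height $H(q)=\max_{\sigma\in S_1}|\sigma(q)|\|q\|_\rr^{r_\sigma}$ and a secondary index $k$ tracking how far $\|q\|_\rr^{2r}$ exceeds $H(q)$, one proves (Lemma~\ref{lemma qrank1}) that for a ball $B\in\sB_n$ there is \emph{at most one} ratio $p/q\in K$ with $q\in\cP_{n+k,k}$ and $\Delta_\varepsilon(p,q)\cap B\neq\emptyset$. The proof is the number-field analogue of the BPV/An trick: if two distinct ratios intersected $B$, then $|N(p_1q_2-p_2q_1)|\ge1$ forces $\prod_\sigma|\sigma(p_1)/\sigma(q_1)-\sigma(p_2)/\sigma(q_2)|\ge|N(q_1q_2)|^{-1}$, while the geometry of $\Delta_\varepsilon$ and the definition of $\cP_{n+k,k}$ give a strictly smaller upper bound on the same product (splitting into $\sigma\in S_1$ and $\sigma\in S_2$, the latter using $|\sigma(q)|\le\varepsilon$). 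Once uniqueness is in hand, Alice blocks the $d$ coordinate hyperplanes through that single point with radii $R^{-n-k}\rho_0$, and the sum $d\sum_{k\ge1}(R^{-n-k}\rho_0)^\gamma$ is controlled by the choice of $R$ in~\eqref{value-R}. Your formulation of $\Bad(K,\rr)$ also elides the $S_2$-condition $|\sigma(q)|\le\varepsilon$ for $\sigma$ with $r_\sigma=0$, which is what defines $\cO_K(\rr,\varepsilon)$ and is used in the $S_2$ half of the contradiction in Lemma~\ref{lemma qrank1}.

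On the assembly step: your claim that ``$\bx\mapsto u_\bx\Gamma$ is a $C^1$ diffeomorphism onto an open subset of $G/\Gamma$'' is false as stated — $H$ is $d$-dimensional while $G/\Gamma$ is $3d$-dimensional, so the image is a proper submanifold, not open. What the paper does instead is to use the full lower-triangular subgroup $P$ (neutral and contracting directions), observe that $\Lambda\in E(F_\rr^+)\iff p\Lambda\in E(F_\rr^+)$ for all $p\in P$ since $\Ad(F_\rr^+)$ is bounded on $\mathrm{Lie}(P)$, and use the Bruhat decomposition plus Borel density to get $\pi(PH)=G/\Gamma$. Then one localizes on a chart $\Omega_P\times\Omega_H$, where $E(F_\rr^+)$ pulls back to $\Omega_P\times(\pi^{-1}(E(F_\rr^+))\cap\Omega_H)$, which is HAW by the product property in Lemma~\ref{haw-manifold}(5) combined with Proposition~\ref{main lemma} and the Dani correspondence~\eqref{e:dani}. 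Your intuition that the neutral/contracting directions contribute trivially is correct, but the precise chart you want is the $P\times H$ one, not the image of $u$ alone.
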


Proposition \ref{main prop} will be proved by studying the set
\begin{equation*}
E(F_{\rr}^+):=\{x\in \ggm: F_{\rr}^+x \text{ is bounded }\},
\end{equation*}
where $F_{\rr}^+ =\{g_{\rr}(t):t\ge 0\}$.


We will fix $\rr$ in this and the next section. Set
$$S_1=\{\sigma\in S:r_{\sigma}>0\}, \text{ and } S_2=S\setminus S_1.$$
Assume $|S_1|=d_1, |S_2|=d_2$.  Choose and fix $\omega\in S$ with $r_{\omega}=r$, where
$$r=\max_{\sigma\in S}r_{\sigma}. $$
 Define a weighted norm, called the \emph{$\rr$-norm}, on $\prod_{\sigma\in S}\RR$ by
$$\|\bx\|_{\rr}=\max_{\sigma\in S_1}|x_{\sigma}|^{\frac{1}{r_{\sigma}}}.$$
\begin{definition}
Say a vector $\bx=(x_{\sigma})_{\sigma\in S} \in \prod_{\sigma\in S}\RR$ is \emph{$(K,\rr)$-badly approximable } if
\begin{equation*}
\inf_{\substack{q\in \cO_K\setminus \{0\}\\p\in \cO_K }} \max \left\{\max_{\sigma\in S_1} \|q\|_{\rr}^{r_{\sigma}}|\sigma(q)x_{\sigma}+\sigma(p)|,
\max_{\sigma\in S_2} \max\{|\sigma(q)x_{\sigma}+\sigma(p)|,|\sigma(q)|\} \right\}>0.
\end{equation*}
The set of $(K,\rr)$-badly approximable vectors is denoted as $\Bad(K,\rr)$.
\end{definition}
\begin{remark}
The notation of $(K,\rr)$-badly approximable vector is the weighted case of $K$-badly approximable vector introduced in \cite{EGL}.
\end{remark}
Denote the strictly upper triangular subgroup of $G$ as $H$, and note that $H$ can be identified with  $\prod_{\sigma\in S}\RR$ through the map:
 $$u:\prod_{\sigma\in S}\RR \rightarrow \prod_{\sigma\in S}\SL_2(\RR), \quad u((x_{\sigma})_{\sigma\in S})= \left(\begin{pmatrix}1&x_{\sigma}\\0&1\end{pmatrix}\right)_{\sigma\in S}$$

Then we have the following correspondence between $(K,\rr)$-badly approximable vectors and bounded $F_{\rr}^+$ trajectories known in the literature as the Dani correspondence.
\begin{proposition}\label{dani}
 A vector $\bx=(x_{\sigma})_{\sigma\in S}$ is $(K,\rr)$-badly approximable if and only if the trajectory $F_{\rr}^+u(\bx)\Gamma$ is bounded in $\ggm$.
In other words,
\begin{equation}\label{e:dani}
\Bad(K,\rr)= u^{-1}\big(\pi^{-1}(E(F_{\rr}^+))\cap H\big),
\end{equation}
where $\pi$ denotes the projection $G\rightarrow \ggm$.
\end{proposition}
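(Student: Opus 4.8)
The plan is to run the standard Dani-type argument: the trajectory $F_{\rr}^+ u(\bx)\Gamma$ fails to be bounded precisely when the lattice $g_{\rr}(t)u(\bx)\cO_K^2$ contains nonzero vectors of arbitrarily small norm as $t\to+\infty$, and to unwind this condition into the Diophantine inequality defining $\Bad(K,\rr)$. Since $\Gamma = \theta(\SL_2(\cO_K))$, a point $g\Gamma \in \ggm$ can be identified with the unimodular lattice $g\cdot\theta(\cO_K^2) \subset \prod_{\sigma\in S}\RR^2$, i.e. with the $\ZZ$-span of the vectors $\bigl(\sigma(q),\sigma(p)\bigr)_{\sigma\in S}$ for $(q,p)$ ranging over an $\cO_K$-basis of $\cO_K^2$; equivalently the lattice consists of all $\bigl(\sigma(q),\sigma(p)\bigr)_{\sigma\in S}$ with $(q,p)\in\cO_K^2$. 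Applying $g_{\rr}(t)u(\bx)$ sends such a vector to $\bigl(e^{r_\sigma t}(\sigma(q)x_\sigma + \sigma(p)),\, e^{-r_\sigma t}\sigma(q)\bigr)_{\sigma\in S}$.

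First I would invoke the Mahler compactness criterion in the form adapted to $\ggm$ (a set is bounded in $\ggm$ iff the corresponding family of unimodular lattices has a uniform lower bound on the length of shortest nonzero vectors). Thus $F_{\rr}^+ u(\bx)\Gamma$ is bounded iff there is $\varepsilon>0$ such that for all $t\ge 0$ and all $(q,p)\in\cO_K^2\setminus\{0\}$,
\begin{equation*}
\max_{\sigma\in S}\max\bigl\{ e^{r_\sigma t}|\sigma(q)x_\sigma + \sigma(p)|,\; e^{-r_\sigma t}|\sigma(q)| \bigr\} \ge \varepsilon.
\end{equation*}
The next step is the optimization over $t$: for a fixed $(q,p)$ with $q\ne 0$, one chooses $t$ to balance the relevant terms. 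For $\sigma\in S_1$ (where $r_\sigma>0$) the natural choice is dictated by $\|q\|_{\rr} = \max_{\sigma\in S_1}|\sigma(q)|^{1/r_\sigma}$: taking $e^{t}$ of order $\|q\|_{\rr}$ makes $e^{-r_\sigma t}|\sigma(q)| \le 1$ for every $\sigma\in S_1$ and turns $e^{r_\sigma t}|\sigma(q)x_\sigma+\sigma(p)|$ into $\|q\|_{\rr}^{r_\sigma}|\sigma(q)x_\sigma+\sigma(p)|$, up to bounded multiplicative constants. For $\sigma\in S_2$ (where $r_\sigma=0$) the factors $e^{\pm r_\sigma t}$ are trivial and the contribution is exactly $\max\{|\sigma(q)x_\sigma+\sigma(p)|,|\sigma(q)|\}$, matching the corresponding term in the definition of $\Bad(K,\rr)$. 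Carrying this out in both directions — sup over $t$ of the Mahler quantity is comparable to the $\Bad(K,\rr)$ quantity for each fixed $(q,p)$, and conversely — gives the equivalence, with the case $q=0$ handled separately (then $p\ne 0$, $\sigma(p)\in\cO_K$ is bounded away from $0$, and the vector never becomes short since $r_\sigma\ge 0$; one must also note $S_1\ne\emptyset$ because $\sum r_\sigma = 1$). The identity \eqref{e:dani} is then just a restatement, using that $u$ is a bijection onto $H$ and $H \cap \pi^{-1}(E(F_{\rr}^+))$ is exactly the set of $u(\bx)$ with bounded forward orbit.

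The main obstacle is bookkeeping rather than conceptual: one has to verify that the choice of $t$ optimal for the $S_1$-coordinates does not spoil the $S_2$-coordinates and vice versa, and that the implied constants (from the discretization of $t$, from comparing $\max$ over $\sigma$ with the weighted norm, and from Mahler's criterion) are uniform in $(q,p)$ — so that a positive infimum in the $\Bad$ definition yields a genuine uniform lower bound along the whole trajectory, and conversely. A clean way to manage this is to treat $S_1$ and $S_2$ separately and to parametrize $t$ so that $e^{t}$ runs over a fixed geometric progression, absorbing all rounding into a single constant depending only on $\rr$ and $d$; I would also record the elementary fact that $(q,p)\in\cO_K^2\setminus\{0\}$ with $q=0$ contributes nothing, so only $q\ne 0$ matters, which is what makes $\|q\|_{\rr}$ well-defined and positive in the relevant regime.
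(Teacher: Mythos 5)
Your plan follows the paper's argument quite closely: pass from $\ggm$ to lattices in $\RR^{2d}$, apply Mahler's criterion, and optimize over $t$ to turn the condition into the $\Bad(K,\rr)$ inequality. The optimization step is handled correctly — choosing $e^t$ comparable to $\|q\|_{\rr}$ (with a large constant depending on $\varepsilon$) is exactly what makes the $S_1$-terms match $\|q\|_{\rr}^{r_\sigma}|\sigma(q)x_\sigma+\sigma(p)|$ while keeping $e^{-r_\sigma t}|\sigma(q)|$ small, and your remarks on the $S_2$-coordinates, the $q=0$ case, and uniformity in $(p,q)$ are all on point.

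The one genuine gap is the phrase ``invoke the Mahler compactness criterion in the form adapted to $\ggm$.'' Mahler's criterion lives on $\SL_{2d}(\RR)/\SL_{2d}(\ZZ)$, not on $\ggm$, and the step that makes it transferable is precisely where the nontrivial work lies. Your identification of $g\Gamma$ with the lattice $g\cdot\theta(\cO_K^2)$ (rescaled to be unimodular) defines a map $\phi:\ggm\to\SL_{2d}(\RR)/\SL_{2d}(\ZZ)$, but to conclude that boundedness of $\phi(F_{\rr}^+u(\bx)\Gamma)$ is equivalent to boundedness of $F_{\rr}^+u(\bx)\Gamma$ you need $\phi$ to be a \emph{proper} embedding. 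This requires two things you do not address: (a) that the stabilizer in $G$ of the lattice $L_K=D_K^{-1/2d}\theta(\cO_K)\times D_K^{-1/2d}\theta(\cO_K)$ is exactly $\Gamma$ and not larger — which the paper proves by an explicit calculation with the entries of $\psi(g)$, using that an element of $\SL_2(\RR)$ preserving $\sigma_1(\cO_K)^2$ must lie in $\sigma_1(\SL_2(\cO_K))$ and that the remaining factors are then forced by the Galois action — and (b) that the resulting orbit is closed, giving properness, for which the paper cites \cite[Theorem 1.13]{Rag}. Without this you have a continuous map of homogeneous spaces, but no way to conclude that preimages of compact sets are compact, which is what converts Mahler into a criterion on $\ggm$. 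Everything else in your outline is bookkeeping as you say; this step is the conceptual core and should not be waved through.
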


\begin{proof}
For simplicity, denote the elements in $S$ as $\{\sigma_1,\dots,\sigma_d\}$ and the weights $r_{\sigma_i}$ as $r_i$. Without loss of generality, we may assume that $r_i>0$ for $1\le i\le d_1$ and $r_i=0$ for $d_1<i\le d$.
It is easily seen that $D_K^{-\frac{1}{2d}}\theta(\cO_K)$ is a unimodular lattice of $\RR^d$, where $D_K$ is the discriminant of $K$. Write the lattice $D_K^{-\frac{1}{2d}}\theta(\cO_K)\times D_K^{-\frac{1}{2d}}\theta(\cO_K)\subset \RR^{2d}$ simply as $L_K$. Then define a homomorphism $\psi: G \to \SL_{2d}(\RR)$ by
\begin{equation}\label{def-psi}
 \psi(g)_{ij}=\begin{cases} a_i, & \text{ if } 1\le i=j\le d, \\
b_i, & \text{ if } 1\le i=j-d\le d, \\  c_{i-d}, & \text{ if } 1\le i-d=j\le d, \\  d_{i-d}, & \text{ if } d+1\le i=j\le 2d, \\ 0, & \text{ otherwise, } \end{cases} \qquad
 \text{ where } g=\left(\begin{pmatrix}a_i&b_i\\ c_i&d_i\end{pmatrix}\right)_{1\le i\le d}.
\end{equation}
Now we claim that
\begin{equation}\label{claim-fix-gk}
\{g\in G: \psi(g)L_K=L_K\}=\Gamma
\end{equation}
Let $g$ be as in \eqref{def-psi}. First, we focus on the study of $(g)_i:=\begin{pmatrix}a_i&b_i\\ c_i&d_i\end{pmatrix}$. If $\psi(g)L_K=L_K$, it follows that $a_1\sigma_1(k)+b_1\sigma_1(k'), c_1\sigma_1(k)+d_1\sigma_1(k')\in \sigma_1(\cO_K)$ for all $k,k'\in \cO_K$. By choosing $k$ or $k'$ to be $0$, we can show that $f\sigma_1(\cO_K)=\sigma_1(\cO_K)$ for $f=a_1,b_1,c_1,d_1$. Hence it follows from the definition of $\cO_K$ that the matrix $(g)_1$ has all its entries in $\sigma_1(\cO_K)$. Consequently, $$(g)_1\in M_{2\times 2}(\sigma_1(\cO_K))\cap \SL_2(\RR)=\sigma_1(\SL_2(\cO_K)).$$
Then since an element in $\SL_2(\RR)$ is uniquely determined by its action on $\RR^2$, it follows that, if $\psi(g)L_K=L_K$, then  $(g)_i=\sigma_i\sigma_1^{-1}\big((g)_1\big)$. This shows that $\psi(g)\in \theta(\SL_2(\cO_K))=\Gamma$. On the other hand, it is clear that, if $g\in \Gamma$, $\psi(g)$ fixes $L_K$. This completes the proof of claim \eqref{claim-fix-gk}.

As $\Gamma$ is a lattice in $G$, in view of claim \eqref{claim-fix-gk} and \cite[Theorem 1.13]{Rag}, we find that the embedding
$$\phi: \ggm\to  \SL_{2d}(\RR)/\SL_{2d}(\ZZ),\quad \phi(g\Gamma)=\psi(g)L_K$$
is a proper map. Note that here we use the fact that the space $\SL_{2d}(\RR)/\SL_{2d}(\ZZ)$ is the space of unimodular lattices in $\RR^{2d}$ implicity. Hence it follows that:
\begin{equation}\label{e:embed}
\text{ $F_{\rr}^+u(\bx)\Gamma$ is bounded in $\ggm$ } \Longleftrightarrow \text{ $\psi(F_{\rr}^+u(\bx))L_K$ is bounded in $\SL_{2d}(\RR)/\SL_{2d}(\ZZ)$}.
\end{equation}

Note that we have
 $$\psi(g_{\rr}(t))=\diag(e^{r_1t},\dots,e^{r_dt},e^{-r_1t},\dots,e^{-r_dt}),$$
 and
 $$\psi(u(\bx))=\begin{pmatrix}I_d& \diag(\bx)\\ & I_d \end{pmatrix}, \text{ where } \diag(\bx)=\diag(x_1,\dots,x_d).$$
 In view of Mahler's criterion and \eqref{e:embed}, we have
 \begin{align*}
  &\ \text{ $F_{\rr}^+u(\bx)\Gamma$ is bounded in $\ggm$ } \\
  \Longleftrightarrow &\ \text{ $\psi(F_{\rr}^+u(\bx))L_K$ is bounded in $\SL_{2d}(\RR)/\SL_{2d}(\ZZ)$} \\
  \Longleftrightarrow &\  \inf_{p,q\in \cO_K}\inf_{t>0} \max \left\{
  \begin{aligned} &\max_{ 1\le i\le d_1} \max\{e^{r_it}|\sigma_{i}(q)x_i+\sigma_{i}(p)|, e^{-r_it}|\sigma_{i}(q)|\}, \\
  &\max_{ d_1< i\le d} \max\{|\sigma_{i}(q)x_i+\sigma_{i}(p)|, |\sigma_{i}(q)|\} \end{aligned} \right\}>0 \\
  \Longleftrightarrow &\ \inf_{\substack{q\in \cO_K\setminus \{0\}\\p\in \cO_K }}\inf_{t>0} \max \left\{
  \begin{aligned} &\max_{ 1\le i\le d_1} \max\{e^{t}|\sigma_{i}(q)x_i+\sigma_{i}(p)|^{\frac{1}{r_i}}, e^{-t}|\sigma_{i}(q)|^{\frac{1}{r_i}}\}, \\
  &\max_{ d_1< i\le d} \max\{|\sigma_{i}(q)x_i+\sigma_{i}(p)|, |\sigma_{i}(q)|\} \end{aligned} \right\}>0 \\
  \Longleftrightarrow &\ \inf_{\substack{q\in \cO_K\setminus \{0\}\\p\in \cO_K }}\inf_{t>0} \max \left\{
  \begin{aligned} &\max \{\max_{ 1\le i\le d_1} e^{t}|\sigma_{i}(q)x_i+\sigma_{i}(p)|^{\frac{1}{r_i}}, e^{-t}\|q\|_{\rr}\}, \\
  &\max_{ d_1< i\le d} \max\{|\sigma_{i}(q)x_i+\sigma_{i}(p)|, |\sigma_{i}(q)|\} \end{aligned} \right\}>0 \\
  \Longleftrightarrow &\ \inf_{\substack{q\in \cO_K\setminus \{0\}\\p\in \cO_K }} \max \left\{
  \begin{aligned} &\|q\|_{\rr}\left(\max_{ 1\le i\le d_1}|\sigma_{i}(q)x_i+\sigma_{i}(p)|^{\frac{1}{r_i}}\right), \\
  &\max_{ d_1< i\le d} \max\{|\sigma_{i}(q)x_i+\sigma_{i}(p)|, |\sigma_{i}(q)|\} \end{aligned} \right\}>0 \\
  \Longleftrightarrow &\ \inf_{\substack{q\in \cO_K\setminus \{0\}\\p\in \cO_K }} \max \left\{
  \begin{aligned} &\max_{ 1\le i\le d_1}\|q\|_{\rr}^{r_i}|\sigma_{i}(q)x_i+\sigma_{i}(p)|, \\
  &\max_{ d_1< i\le d} \max\{|\sigma_{i}(q)x_i+\sigma_{i}(p)|, |\sigma_{i}(q)|\} \end{aligned} \right\}>0.
  \end{align*}
This completes the proof.
\end{proof}

  Now we are ready to state the following Proposition which establishes a weighted number field analogue of McMullen's result. The proof of the Proposition is postponed to the next section.
    \begin{proposition}\label{main lemma}
  $\Bad(K,\rr)$ is HAW.
  \end{proposition}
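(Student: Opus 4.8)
The strategy is to prove that $\Bad(K,\rr)$ is hyperplane potential winning (HPW), which by Lemma \ref{HPW} is equivalent to HAW. Fix parameters $\beta\in(0,1)$ and $\gamma>0$ for the hyperplane potential game, and suppose Bob opens with a ball $B_0$ of radius $\rho_0$. The key number-theoretic input is that for each ``denominator'' $q\in\cO_K\setminus\{0\}$, the set of $\bx$ that are ``trapped'' by $q$, namely those $\bx$ with
\begin{equation*}
\max\left\{\max_{\sigma\in S_1}\|q\|_{\rr}^{r_\sigma}|\sigma(q)x_\sigma+\sigma(p)|,\ \max_{\sigma\in S_2}\max\{|\sigma(q)x_\sigma+\sigma(p)|,|\sigma(q)|\}\right\}<c
\end{equation*}
for some $p\in\cO_K$ and small constant $c>0$, is contained in a bounded union of hyperplane neighborhoods whose total $\delta^\gamma$-weight is controlled. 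Indeed, for fixed $q$ and $\sigma\in S_1$, the condition $\|q\|_{\rr}^{r_\sigma}|\sigma(q)x_\sigma+\sigma(p)|<c$ confines $x_\sigma$ to an interval of length $\asymp c\,\|q\|_{\rr}^{-r_\sigma}|\sigma(q)|^{-1}$, i.e. $\bx$ lies near the hyperplane $\{x_\sigma=-\sigma(p)/\sigma(q)\}$; the conditions from $S_2$ similarly restrict to a neighborhood of a hyperplane, but only finitely many $q$ can satisfy $|\sigma(q)|<c$ for $\sigma\in S_2$ (since $\theta(\cO_K)$ is discrete), so for all but finitely many $q$ the set is empty unless $S_2=\emptyset$ — this is where the totally real case with $r_\sigma>0$ for all $\sigma$ is cleanest.

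**Grouping by size of the denominator.** The heart of the argument is to organize the countably many $q$ by dyadic ranges $\|q\|_{\rr}\in[e^{n},e^{n+1})$ and to feed Alice's moves in at the stage $i=i(n)$ where Bob's current radius $\rho_i$ is comparable to the ``resolution'' $e^{-n}$ (more precisely, when $\beta\rho_i$ first drops below a fixed multiple of $e^{-n}$). At that stage Alice must present a family of hyperplane neighborhoods $\{L_{i,k}^{(\delta_{i,k})}\}$ with $\sum_k\delta_{i,k}^\gamma\le(\beta\rho_i)^\gamma$ covering the danger set coming from all $q$ with $\|q\|_{\rr}$ in that dyadic range that meet Bob's ball $B_i$. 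One uses a counting estimate: the number of $q\in\cO_K\setminus\{0\}$ with $\|q\|_{\rr}\le e^{n}$ is $\ll e^{Cn}$ for an explicit $C$ (coming from the product structure of the $\rr$-norm and Minkowski-type counting in the lattice $\theta(\cO_K)$), while each contributes a hyperplane neighborhood of radius $\delta\asymp c\,e^{-n(1+r_\sigma)}$ or so after accounting for the weight $\|q\|_{\rr}^{r_\sigma}|\sigma(q)|$. Choosing $c$ sufficiently small (depending on $\beta,\gamma$) makes the geometric series $\sum_n (\#\{q\})\cdot\delta^\gamma$ summable and, crucially, makes the contribution from each dyadic block fit under the budget $(\beta\rho_{i(n)})^\gamma$; here one exploits that $\gamma>0$ can be taken as given and that we only need the inequality for the specific $\gamma$ in play, plus the freedom to spread a block's neighborhoods across several consecutive turns if needed.

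**The main obstacle.** The delicate point is the bookkeeping that matches the arithmetic scale $\|q\|_{\rr}\asymp e^{n}$ to the game scale $\rho_i$, while simultaneously ensuring (i) the total weight in each turn respects $\sum_k\delta_{i,k}^\gamma\le(\beta\rho_i)^\gamma$, and (ii) every potentially dangerous $q$ is neutralized before the nested balls shrink past the point where that $q$ could still trap the limit point. Getting the exponents right requires care because the weight $\|q\|_{\rr}^{r_\sigma}$ differs across places $\sigma\in S_1$ and because $\|q\|_{\rr}=\max_{\sigma\in S_1}|\sigma(q)|^{1/r_\sigma}$ is a max, not a product, so a single $q$ may be ``large'' at one place and ``small'' at another; one handles this by, for each $q$, only using the place $\sigma$ realizing the maximum to produce Alice's hyperplane, and checking that this single hyperplane already excludes $\bx$ from being $q$-trapped. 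A secondary technical nuisance is transferring the game from $\RR^d\cong\prod_{\sigma\in S}\RR$ to the manifold $\ggm$ via $u$ and $\pi$ and Proposition \ref{dani}; but since $u$ is an affine embedding and $\pi$ is a local diffeomorphism, the HAW property transfers by Lemma \ref{haw-manifold}(3)--(4) once it is established on $H\cong\RR^d$. I expect the counting-and-scheduling step to be the longest and most error-prone part of the write-up; everything else is either standard game theory (Lemma \ref{HPW}) or the Dani correspondence already proved in Proposition \ref{dani}.
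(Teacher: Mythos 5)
Your opening moves match the paper's: play the hyperplane potential game, fix $\beta,\gamma$, organize denominators $q\in\cO_K$ by scale, and play Alice's hyperplane neighborhoods at the turn where Bob's radius reaches the matching resolution. But the engine you propose for bounding the potential budget — a global counting estimate $\#\{q:\|q\|_{\rr}\le e^n\}\ll e^{Cn}$ combined with summing $\delta^\gamma$ over all such $q$ — is not the one the paper uses, and I don't believe it closes. The number of $q$ with $\|q\|_{\rr}\le T$ (and the $S_2$-constraints satisfied) is $\asymp T$, while each of these $q$ contributes a hyperplane neighborhood of width $\delta\asymp\varepsilon/\|q\|_{\rr}^{2r}$ at the maximizing place, so the dyadic block at scale $T=R^n$ contributes $\asymp R^n\cdot(\varepsilon R^{-2rn})^\gamma$ to the budget at a turn where $(\beta\rho_{i(n)})^\gamma\asymp R^{-2rn\gamma}$. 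The ratio is $\asymp R^n\varepsilon^\gamma$, which blows up with $n$ regardless of how small $\varepsilon$ is; there is no choice of constants that makes this fit for arbitrary $\gamma>0$.

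What actually makes the argument work in the paper — and what is missing from your plan — is a rigidity statement, not a count: Lemma \ref{lemma qrank1} shows that for a ball $B\in\sB_n$ and each $k\ge 1$, \emph{all} pairs $(p,q)$ with $q\in\cP_{n+k,k}$ whose danger box $\Delta_\varepsilon(p,q)$ meets $B$ determine the \emph{same} fraction $p/q\in K$. The proof exploits integrality of $\cO_K$: for two distinct fractions one has $\bigl|\prod_\sigma(\sigma(p_1)/\sigma(q_1)-\sigma(p_2)/\sigma(q_2))\bigr|\ge 1/|N(q_1q_2)|$, and the geometry of $B$ together with the definitions of $\sB_n$, $\cP_{n+k,k}$ forces the same product to be strictly below $1/|N(q_1q_2)|$ — a contradiction. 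Consequently Alice needs only $d$ hyperplane neighborhoods per value of $k$ (one per coordinate $\tau\in S$, all at the common point $\theta(p/q)$), and the sum over $k$ is a geometric series $d\sum_{k\ge1}(R^{-n-k}\rho_0)^\gamma$ which is tamed for \emph{every} $\gamma>0$ by taking $R$ large relative to $\beta,\gamma,d$ as in \eqref{value-R}. Your counting heuristic is essentially the correct philosophy only after you realize the local count must be $O(1)$, and proving that local $O(1)$ bound \emph{is} the rigidity lemma; it cannot be replaced by a Minkowski-type global count.

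Two further inaccuracies in the write-up. First, your remark that ``only finitely many $q$ can satisfy $|\sigma(q)|<c$ for $\sigma\in S_2$'' is false when $S_1\ne\emptyset$ (which is always the case since $\sum r_\sigma=1$): the set $\{q\in\cO_K:|\sigma(q)|\le\varepsilon\ \forall\sigma\in S_2\}$ is the intersection of a lattice with an unbounded slab and is infinite. The paper does not discard these $q$ but defines $\cO_K(\rr,\varepsilon)$ precisely to be this infinite set, and the extra height function $H(q)$ and two-parameter partition $\cP_{n,k}$ are designed to control the interplay between $H(q)$ and $\|q\|_{\rr}$ via \eqref{ine-hq-qr}. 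Second, once Proposition \ref{main lemma} is in hand, passing to $G/\Gamma$ is not merely ``$\pi$ is a local diffeomorphism'': one needs the Bruhat decomposition and Borel density to see $\pi(PH)=G/\Gamma$, plus the product lemma for HAW sets (Lemma \ref{haw-manifold}(5)), since boundedness is invariant under the nonexpanding part $P$. This is handled in the proof of Proposition \ref{main prop} modulo Proposition \ref{main lemma}, but your sketch glosses over it.
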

\begin{proof}[Proof of Proposition \ref{main prop} modulo Proposition \ref{main lemma}]
Write $$P:=\prod_{\sigma\in S} \left(\begin{pmatrix}*&0\\ *&*\end{pmatrix}\right)_{\sigma\in S}.$$
As for any $p\in P$ the set $\{\Ad(g)p: g\in F_{\rr}^+\}$ is bounded,  we have
\begin{equation}\label{e:ph-product}
\Lambda\in E(F_{\rr}^+) \Longleftrightarrow p\Lambda\in E(F_{\rr}^+) \quad \forall p\in P.
\end{equation}
We claim that
\begin{equation}\label{e:surj-proj}
\pi(PH)=\ggm.
\end{equation}
 Indeed, according to the Bruhat decomposition, the set $PH$ is Zariski open in $G$. Suppose to the contrary that $g\Gamma\notin \pi(PH)$ for some $g\in G$, then we will have $\Gamma \cap g^{-1}PH =\emptyset$. This contradicts the Borel density theorem, hence proves our claim.

To prove $E(F_{\rr}^+)$ is HAW, it suffices to prove that for any $\Lambda\in \ggm$, there exists a neighborhood $\Omega$ of $\Lambda$ in $\ggm$ such that the set $E(F_{\rr}^+)\cap \Omega$ is HAW. In view of \eqref{e:surj-proj}, we can find $p_0\in P $ and $u_0\in H$ such that $p_0u_0\Gamma=\Lambda$. Then choose a neighborhood $\Omega_P$ (resp. $\Omega_H$) of $p_0$ (resp. $u_0$) in $P$ (resp. $H$) small enough that the map
$\phi: \Omega_P\times \Omega_H\to \ggm, (p,u) \mapsto pu\Gamma$ is a diffeomorphism onto its image $\Omega$. Hence we are reduced to proving that the set
$$\phi^{-1}(E(F_{\rr}^+)\cap \Omega)=\{(p,u)\in \Omega_P\times \Omega_H: pu\Gamma\in E(F_{\rr}^+)\}$$
is HAW. In view of \eqref{e:ph-product}, the set defined above coincides with
\begin{equation}\label{e:set-ph}
\Omega_P\times \big(\pi^{-1}(E(F_{\rr}^+))\cap \Omega_H\big)
\end{equation}
and the HAW property of the set \eqref{e:set-ph} follows from \eqref{e:dani} and Proposition \ref{main lemma}. Let $F_\rr^-=\{g_t: t\le 0\}$, and let $\rho$ be the automorphism of $G$ which sends $g$ to ${}^tg^{-1}$ on each $\SL_2(\RR)$ component. Then it is clear that $\rho(F_\rr^+)=F_\rr^-$ and $\rho$ fixes $\Gamma$, hence induces a diffeomorphism of $\ggm$, also denoted as $\rho$. As a consequence, $E(F_\rr^-)=\rho(E(F_{\rr}^+)) $ is HAW and so is $E(F)=E(F_\rr^-) \cap E(F_\rr^+)$. This completes the proof.
\end{proof}

\section{Proof of Proposition \ref{main lemma}}
First we introduce another formulation of the set $\Bad(K,\rr)$.
For $\varepsilon>0$, set
$$\cO_K(\rr,\varepsilon)=\{q\in \cO_K\setminus \{0\}: \max_{\sigma\in S_2} |\sigma(q)|\le \varepsilon\}.$$
For $(p,q)\in \cO_K\times \cO_K(\rr,\varepsilon)$, define
 $$\Delta_{\varepsilon}(p,q)= \prod_{\sigma\in S_1}\left[\frac{\sigma(p)}{\sigma(q)} \pm\frac{\varepsilon}{|\sigma(q)|\|q\|_{\rr}^{r_{\sigma}}}\right]\times
 \prod_{\sigma\in S_2}\left[\frac{\sigma(p)}{\sigma(q)} \pm\frac{\varepsilon}{|\sigma(q)|}\right]\subset \prod_{\sigma\in S}\RR,$$
 where $[A\pm B]$ denotes the interval $[A-B,A+B]\subset \RR$. Then set
  \begin{equation}\label{def-badc}
  \Bad_{\varepsilon}(K,\rr):=\prod_{\sigma\in S}\RR \setminus \bigcup_{(p,q)\in \cO_K\times \cO_K(\rr,\varepsilon)}\Delta_{\varepsilon}(p,q).
  \end{equation}
  It is not hard to check:
  \begin{lemma}
  $$\Bad(K,\rr)=\bigcup_{\varepsilon>0}\Bad_{\varepsilon}(K,\rr).$$
  \end{lemma}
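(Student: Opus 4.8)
\noindent\emph{Proof strategy.}
The plan is to reduce the identity to a manipulation of the single quantity
\[
\Phi(p,q,\bx):=\max\Bigl\{\max_{\sigma\in S_1}\|q\|_{\rr}^{r_\sigma}|\sigma(q)x_\sigma+\sigma(p)|,\ \max_{\sigma\in S_2}|\sigma(q)x_\sigma+\sigma(p)|,\ \max_{\sigma\in S_2}|\sigma(q)|\Bigr\}
\]
that appears inside the infimum defining $\Bad(K,\rr)$, so that $\Bad(K,\rr)=\{\bx:\inf_{q\in\cO_K\setminus\{0\},\,p\in\cO_K}\Phi(p,q,\bx)>0\}$.

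First I would unwind membership in a forbidden box. Clearing the denominator $|\sigma(q)|$, and then the weight $\|q\|_{\rr}^{r_\sigma}$, in each coordinate shows that, for $q\in\cO_K\setminus\{0\}$ and $p\in\cO_K$, one has $\bx\in\Delta_\varepsilon(p,q)$ exactly when $\|q\|_{\rr}^{r_\sigma}|\sigma(q)x_\sigma-\sigma(p)|\le\varepsilon$ for every $\sigma\in S_1$ and $|\sigma(q)x_\sigma-\sigma(p)|\le\varepsilon$ for every $\sigma\in S_2$. Since replacing $p$ by $-p$ is a bijection of $\cO_K$, in the union defining $\Bad_\varepsilon(K,\rr)$ one may just as well use $\Delta_\varepsilon(-p,q)$, for which the displayed inequalities read $\|q\|_{\rr}^{r_\sigma}|\sigma(q)x_\sigma+\sigma(p)|\le\varepsilon$ and $|\sigma(q)x_\sigma+\sigma(p)|\le\varepsilon$. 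Passing to the complement: for $q\in\cO_K(\rr,\varepsilon)$ the condition $\bx\notin\Delta_\varepsilon(-p,q)$ is equivalent to $\Phi(p,q,\bx)>\varepsilon$, because the remaining term $\max_{\sigma\in S_2}|\sigma(q)|$ of the maximum defining $\Phi$ is $\le\varepsilon$ by the hypothesis $q\in\cO_K(\rr,\varepsilon)$ and hence cannot affect whether the maximum exceeds $\varepsilon$; while for $q\notin\cO_K(\rr,\varepsilon)$ the inequality $\Phi(p,q,\bx)>\varepsilon$ holds automatically and such a $q$ contributes no box to the union. Intersecting over all admissible $(p,q)$ I would thus obtain
\[
\Bad_\varepsilon(K,\rr)=\Bigl\{\bx\in\textstyle\prod_{\sigma\in S}\RR:\ \Phi(p,q,\bx)>\varepsilon\ \text{for all }q\in\cO_K\setminus\{0\},\ p\in\cO_K\Bigr\}.
\]

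Granting this, the asserted identity is a one-line comparison of quantifiers: $\bx$ lies in $\bigcup_{\varepsilon>0}\Bad_\varepsilon(K,\rr)$ iff for some $\varepsilon>0$ one has $\Phi(p,q,\bx)>\varepsilon$ for all $(p,q)$, and this in turn holds iff $\inf_{q\in\cO_K\setminus\{0\},\,p\in\cO_K}\Phi(p,q,\bx)>0$ (for the forward direction take any $\varepsilon$ smaller than that infimum, say half of it), i.e. iff $\bx\in\Bad(K,\rr)$. I do not expect a genuine obstacle here; the only step requiring care is the bookkeeping in the second paragraph — tracking the two-block structure of the maximum defining $\Phi$ (the $S_1$-coordinates carrying the weight $\|q\|_{\rr}^{r_\sigma}$ versus the $S_2$-coordinates), noting that the clause ``$\max_{\sigma\in S_2}|\sigma(q)|\le\varepsilon$'' is packaged on one side as the restriction $q\in\cO_K(\rr,\varepsilon)$ and on the other as a term inside the maximum, and accounting for the harmless sign change $p\mapsto-p$.
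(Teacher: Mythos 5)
Your proposal is correct and follows essentially the same route the paper takes: translate membership in the boxes $\Delta_\varepsilon(p,q)$ into the coordinate inequalities $\|q\|_{\rr}^{r_\sigma}|\sigma(q)x_\sigma-\sigma(p)|\le\varepsilon$ (for $\sigma\in S_1$) and $|\sigma(q)x_\sigma-\sigma(p)|\le\varepsilon$ (for $\sigma\in S_2$), observe that the clause $\max_{\sigma\in S_2}|\sigma(q)|\le\varepsilon$ is exactly the restriction $q\in\cO_K(\rr,\varepsilon)$, and then compare quantifiers over $\varepsilon$. The paper compresses this into the step from its equation \eqref{def-bad-epsilon} to \eqref{def-bad-eps} and declares the rest ``straightforward to verify, hence omitted''; you have supplied precisely the bookkeeping it leaves out, including the innocuous but genuinely necessary observation that $p\mapsto -p$ is a bijection of $\cO_K$ reconciling the sign $\sigma(q)x_\sigma+\sigma(p)$ in the definition of $\Bad(K,\rr)$ with the center $\sigma(p)/\sigma(q)$ of the box.
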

  \begin{proof}
  It suffices to show that the set of vectors $\bx=(x_{\sigma})_{\sigma\in S} \in \prod_{\sigma\in S}\RR$ satisfying
  \begin{equation}\label{def-bad-epsilon}
\inf_{\substack{q\in \cO_K\setminus \{0\}\\p\in \cO_K }} \max \left\{\max_{\sigma\in S_1} \|q\|_{\rr}^{r_{\sigma}}|\sigma(q)x_{\sigma}+\sigma(p)|,
\max_{\sigma\in S_2} \max\{|\sigma(q)x_{\sigma}+\sigma(p)|,|\sigma(q)|\} \right\}>\varepsilon
\end{equation}
coincides with $\Bad_{\varepsilon}(K,\rr)$. By the definition of $\cO_K(\rr,\epsilon)$, equation \eqref{def-bad-epsilon} is equivalent to the following
\begin{equation}\label{def-bad-eps}
\inf_{\substack{q\in \cO_K(\rr,\varepsilon)\\p\in \cO_K }} \max \left\{\max_{\sigma\in S_1} \|q\|_{\rr}^{r_{\sigma}}|\sigma(q)x_{\sigma}+\sigma(p)|,
\max_{\sigma\in S_2}|\sigma(q)x_{\sigma}+\sigma(p)| \right\}>\varepsilon.
\end{equation}
 Now we are reduced to show the set of vectors $\bx=(x_{\sigma})_{\sigma\in S} \in \prod_{\sigma\in S}\RR$ satisfying \eqref{def-bad-eps} coincides with $\Bad_{\varepsilon}(K,\rr)$, which is straightforward to verify, and hence omitted.
  \end{proof}

To prove the set $\Bad(K,\rr)$ is HAW, it suffices to prove that it is $(\beta,\gamma)$-hyperplane potential winning for any
$\beta\in (0,1),\gamma>0.$ We choose and fix a pair of such $(\beta,\gamma)$ in this section. Furthermore, we denote the ball chosen by Bob in the first round of the game by $B_0$. By letting Alice making arbitrary moves at the first rounds and relabelling the index, we may assume $\rho_0=\rho(B_0)<1$ without loss of generality. Choose and fix $R>0$ satisfying
\begin{equation}\label{value-R}
\frac{d}{R^{\gamma}-1}\le \left(\frac{\beta^2}{2}\right)^{\gamma}.
\end{equation}
Then set
\begin{equation}\label{def-values}
\varepsilon=\frac{1}{4}\rho_0R^{-4d} \quad \text{ and } \quad H_{n}=\varepsilon\rho_0^{-1} R^n \quad  (n\ge 1).
\end{equation}
Now for $n\ge 0$, we define a class of closed balls $\sB_n$ as
$$\sB_n:=\{B\subset B_0: \beta R^{-n}\rho_0<\rho(B)\le R^{-n}\rho_0\}.$$

We are going to define a subdivision of $\cO_K(\rr,\varepsilon)$. To begin, we shall need the following height function:
$$H: \cO_K(\rr,\varepsilon)\to \RR,\quad H(q)= \max_{\sigma\in S_1} |\sigma(q)|\|q\|_{\rr}^{r_{\sigma}}.$$
We have the following lemma controlling the sizes of $H(q)$ and $\|q\|_{\rr}$.
\begin{lemma} For all $q\in \cO_K(\rr,\varepsilon)$, there holds
\begin{equation}\label{ine-hq-qr}
1\le \|q\|_{\rr}^{\frac{1}{d}}\le H(q)\le \|q\|_{\rr}^{2r}.
\end{equation}
\end{lemma}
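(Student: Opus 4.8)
The plan is to deduce all three inequalities from two elementary facts about $q\in\cO_K(\rr,\varepsilon)$: since $q\in\cO_K\setminus\{0\}$ its norm satisfies $\prod_{\sigma\in S}|\sigma(q)|=|N_{K/\QQ}(q)|\ge 1$, and since $\varepsilon=\tfrac14\rho_0 R^{-4d}<1$ (here $\rho_0<1$ and $R>1$ by \eqref{value-R}), the defining condition of $\cO_K(\rr,\varepsilon)$ gives $|\sigma(q)|\le\varepsilon<1$ for every $\sigma\in S_2$. Also recall $\sum_{\sigma\in S_1}r_{\sigma}=\sum_{\sigma\in S}r_{\sigma}=1$, and that the definition of the $\rr$-norm yields $|\sigma(q)|\le\|q\|_{\rr}^{r_{\sigma}}$ for each $\sigma\in S_1$.

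First I would establish the left-hand inequality $\|q\|_{\rr}\ge 1$. From $\prod_{\sigma\in S_2}|\sigma(q)|\le\varepsilon^{d_2}\le 1$ and the norm bound we get $\prod_{\sigma\in S_1}|\sigma(q)|\ge 1$. On the other hand, multiplying the bounds $|\sigma(q)|\le\|q\|_{\rr}^{r_{\sigma}}$ over $\sigma\in S_1$ gives $\prod_{\sigma\in S_1}|\sigma(q)|\le\|q\|_{\rr}^{\sum_{\sigma\in S_1}r_{\sigma}}=\|q\|_{\rr}$. Combining, $\|q\|_{\rr}\ge\prod_{\sigma\in S_1}|\sigma(q)|\ge 1$, which gives $1\le\|q\|_{\rr}^{1/d}$ and, crucially, the fact $\|q\|_{\rr}\ge 1$ that I use in the remaining two steps.

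For the middle inequality, since $H(q)$ is the maximum of the $d_1$ quantities $|\sigma(q)|\,\|q\|_{\rr}^{r_{\sigma}}$ ($\sigma\in S_1$), raising it to the power $d_1$ dominates their product:
$$H(q)^{d_1}\ \ge\ \prod_{\sigma\in S_1}|\sigma(q)|\,\|q\|_{\rr}^{r_{\sigma}}\ =\ \Big(\prod_{\sigma\in S_1}|\sigma(q)|\Big)\,\|q\|_{\rr}^{\sum_{\sigma\in S_1}r_{\sigma}}\ \ge\ \|q\|_{\rr},$$
using again $\prod_{\sigma\in S_1}|\sigma(q)|\ge 1$ and $\sum_{\sigma\in S_1}r_{\sigma}=1$. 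Hence $H(q)\ge\|q\|_{\rr}^{1/d_1}\ge\|q\|_{\rr}^{1/d}$, the last step because $\|q\|_{\rr}\ge 1$ and $d_1\le d$. For the right-hand inequality, for each $\sigma\in S_1$ the bound $|\sigma(q)|\le\|q\|_{\rr}^{r_{\sigma}}$ gives $|\sigma(q)|\,\|q\|_{\rr}^{r_{\sigma}}\le\|q\|_{\rr}^{2r_{\sigma}}\le\|q\|_{\rr}^{2r}$, the last inequality using $r_{\sigma}\le r$ and $\|q\|_{\rr}\ge 1$; taking the maximum over $\sigma\in S_1$ yields $H(q)\le\|q\|_{\rr}^{2r}$.

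There is no substantial obstacle here; this is a bookkeeping lemma. The only points requiring care are remembering that the maxima defining $\|q\|_{\rr}$ and $H(q)$ run over $S_1$ (not over all of $S$), and noting at the outset that $\varepsilon<1$, so that the $S_2$-coordinates of $q$ are genuinely $<1$ and the product $\prod_{\sigma\in S_2}|\sigma(q)|$ can be discarded from the norm identity in the right direction.
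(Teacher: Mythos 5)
Your proof is correct and uses essentially the same ingredients as the paper's: $|N_{K/\QQ}(q)|\ge 1$, $|\sigma(q)|\le\varepsilon<1$ for $\sigma\in S_2$, and $|\sigma(q)|\le\|q\|_{\rr}^{r_\sigma}$ for $\sigma\in S_1$, combined with $\sum_{\sigma\in S_1}r_\sigma=1$. Your ordering is actually slightly cleaner than the paper's: you establish $\|q\|_{\rr}\ge 1$ first and then invoke it explicitly when passing from $H(q)\ge\|q\|_{\rr}^{1/d_1}$ to $H(q)\ge\|q\|_{\rr}^{1/d}$ and from $\|q\|_{\rr}^{2r_\sigma}$ to $\|q\|_{\rr}^{2r}$, whereas the paper proves the middle and right inequalities before the left one and leaves these dependences implicit.
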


\begin{proof}
For the second inequality in \eqref{ine-hq-qr}, we have
\begin{equation*}
H(q)^{d_1}\ge\prod_{\sigma\in S_1} |\sigma(q)|\|q\|_{\rr}^{r_{\sigma}}\ge \left(\prod_{\sigma\in S_2}\sigma(q)\right)^{-1}|N(q)|\|q\|_{\rr} \ge \|q\|_{\rr}.
\end{equation*}
The third inequality in \eqref{ine-hq-qr} is a direct consequence of the following estimate
\begin{equation}\label{ine-q-qr}
|\sigma(q)|\le \|q\|_{\rr}^{r_{\sigma}}, \quad \text{ for all } \sigma\in S_1,
\end{equation}
which is easy to check by the definition of $\|q\|_{\rr}$.
Finally, according to \eqref{ine-q-qr}, we have
\begin{equation*}
\|q\|_{\rr}\ge \prod_{\sigma\in S_1} |\sigma(q)|\ge \left(\prod_{\sigma\in S_2}\sigma(q)\right)^{-1}|N(q)|\ge 1.
\end{equation*}
This gives the first inequality.
\end{proof}

Now we can define the subdivision of $\cO_K(\rr,\varepsilon)$. Set
\begin{equation*}
\cP_n=\{q\in \cO_K(\rr,\varepsilon): H_n\le H(q)< H_{n+1}\},
\end{equation*}
and
\begin{equation*}
\cP_{n,k}=\{q\in \cP_n: H_nR^{(4k-4)d}\le \|q\|_{\rr}^{2r}< H_n R^{4kd}\}.
\end{equation*}

In view of \eqref{ine-hq-qr} and the trivial estimate $H_1<1$, we have
$$\cO_K(\rr,\varepsilon)=\bigcup_{n\ge 0}\cP_n.$$
The following lemma is important.
\begin{lemma}
$$\cO_K(\rr,\varepsilon)=\bigcup_{n\ge0}\bigcup_{k\ge 1} \cP_{n+k,k}.$$
\end{lemma}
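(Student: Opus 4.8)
The claim is that $\cO_K(\rr,\varepsilon)=\bigcup_{n\ge 0}\bigcup_{k\ge 1}\cP_{n+k,k}$. The inclusion $\supseteq$ is immediate since each $\cP_{n+k,k}\subseteq\cP_{n+k}\subseteq\cO_K(\rr,\varepsilon)$, so the content is the inclusion $\subseteq$. The plan is to fix $q\in\cO_K(\rr,\varepsilon)$, locate the unique index $m\ge 0$ with $q\in\cP_m$ (which exists because $\cO_K(\rr,\varepsilon)=\bigcup_{m\ge 0}\cP_m$ by the previous lemma), and then show there is an index $k\ge 1$ with $q\in\cP_{m,k}$ and moreover $k\le m$, so that writing $n=m-k\ge 0$ gives $q\in\cP_{n+k,k}$ as desired.

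First I would establish that for each $q\in\cP_m$ there is indeed some $k\ge 1$ with $q\in\cP_{m,k}$, i.e. that the intervals $[H_mR^{(4k-4)d},H_mR^{4kd})$ for $k\ge 1$ cover the range of possible values of $\|q\|_{\rr}^{2r}$. For the lower end: by \eqref{ine-hq-qr} we have $H(q)\le\|q\|_{\rr}^{2r}$, and since $q\in\cP_m$ we have $H_m\le H(q)$, hence $H_m\le\|q\|_{\rr}^{2r}$, which places $\|q\|_{\rr}^{2r}$ at or above the left endpoint $H_mR^{0}=H_m$ of the $k=1$ interval. Since consecutive intervals $\big[H_mR^{(4k-4)d},H_mR^{4kd}\big)$ overlap (the right endpoint of the $k$-th, namely $H_mR^{4kd}$, exceeds the left endpoint of the $(k+1)$-th, namely $H_mR^{4kd}$ — in fact they share it, so the union over $k\ge 1$ is exactly $[H_m,\infty)$), every value $\ge H_m$ lies in some such interval. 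Thus $q\in\cP_{m,k}$ for a well-defined $k\ge 1$.

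The main obstacle — and the only step requiring a genuine inequality chain — is showing $k\le m$, equivalently that $\|q\|_{\rr}^{2r}$ is not too large relative to $H_m$. I would argue as follows: $q\in\cP_{m,k}$ forces $H_mR^{(4k-4)d}\le\|q\|_{\rr}^{2r}$, and $q\in\cP_m$ forces $H(q)<H_{m+1}$; combining with the bound $\|q\|_{\rr}^{1/d}\le H(q)$ from \eqref{ine-hq-qr} we get $\|q\|_{\rr}\le H(q)^d< H_{m+1}^d$, hence $\|q\|_{\rr}^{2r}<H_{m+1}^{2rd}$. Now recall $H_m=\varepsilon\rho_0^{-1}R^m$ with $\varepsilon\rho_0^{-1}=\tfrac14 R^{-4d}<1$, so $H_{m+1}=H_mR\le R^{m+1}$ and, since $2rd\le 2d$ (as $r\le 1$), we obtain $\|q\|_{\rr}^{2r}<R^{(m+1)\cdot 2d}\le R^{2d(m+1)}$, while the left side gives $R^{(4k-4)d}\le\|q\|_{\rr}^{2r}/H_m\le\|q\|_{\rr}^{2r}\cdot \rho_0\varepsilon^{-1}R^{-m}$. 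Comparing exponents of $R$ (using $R>1$) yields a bound of the shape $4k-4\le Cm+C'$ for explicit constants coming from $R^{-4d}=4\varepsilon\rho_0^{-1}$; one checks that the chosen normalizations — in particular the factor $R^{-4d}$ in $\varepsilon$ and the spacing $R^{4d}$ between consecutive $\cP_{m,k}$ versus $R^{1}$ between consecutive $H_n$ — are precisely calibrated so that this forces $k\le m$. Setting $n=m-k$ then completes the proof. I expect the bookkeeping with the constants $\varepsilon$, $H_n$, $R$ to be the delicate point, but it is purely a matter of tracking exponents of $R$ and using $r\le 1$, $d_1,d_2\ge 0$; no new idea is needed.
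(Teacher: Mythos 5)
Your proposal is correct and follows essentially the same route as the paper. The paper argues by contradiction, showing that $\cP_{n,k}=\emptyset$ whenever $k\ge n$ via the chain $\|q\|_{\rr}^{2}\ge\|q\|_{\rr}^{2r}\ge H_nR^{(4k-4)d}\ge H_nR^{(4n-4)d}>H_{n+1}^{2d}$, which contradicts $\|q\|_{\rr}^{1/d}\le H(q)<H_{n+1}$ from \eqref{ine-hq-qr} and \eqref{def-values}; your direct version (locate $m$ and $k$, then bound $k$ above using the same two inequalities $\|q\|_{\rr}^{1/d}\le H(q)\le\|q\|_{\rr}^{2r}$ and the definitions of $\varepsilon$, $H_n$) is the same argument with the contrapositive unfolded, and the exponent bookkeeping you defer does indeed close in the way you expect.
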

\begin{proof}
To prove this lemma, it is  equivalent to prove that
 \begin{equation}\label{claim-nk}
 \cP_{n,k}=\emptyset \qquad \text { for all }k\ge n.
 \end{equation}
  Assuming to the contrary that there is $q\in\cP_{n,k}$ for some $k\ge n$, then we have
  \begin{equation*}
   \|q\|_{\rr}^{2}\ge \|q\|_{\rr}^{2r} \ge H_nR^{(4n-4)d} >H_{n+1}^{2d}
  \end{equation*}
  by \eqref{def-values}. This contradicts \eqref{ine-hq-qr}, hence proves \eqref{claim-nk}.
  \end{proof}

We shall need the following lemma:

\begin{lemma}\label{lemma qrank1}
Let $B\in \sB_n$. Then for any $k\ge 1$, the map $F: \cO_K\times\cO_K(\rr,\varepsilon) \to K^*$ defined by
$$F(p,q)=\frac{p}{q}$$
 is constant on the set
\begin{equation*}
\cP_{n+k,k}(B):=\{(p,q): q\in \cP_{n+k,k} \text{ and } \Delta_{\varepsilon}(p,q)\cap B \ne \emptyset\}.
\end{equation*}
\end{lemma}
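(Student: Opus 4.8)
The plan is to argue by contradiction, showing that two distinct values of $p/q$ cannot both occur. Suppose $(p_1,q_1),(p_2,q_2)\in\cP_{n+k,k}(B)$ with $\tfrac{p_1}{q_1}\ne\tfrac{p_2}{q_2}$, and form the ``resultant''
$$u:=p_1q_2-p_2q_1\in\cO_K.$$
Since $q_1,q_2\ne 0$, the assumption forces $u\ne 0$, so $u$ is a nonzero algebraic integer and hence $|N(u)|=\prod_{\sigma\in S}|\sigma(u)|\ge 1$. The entire point will be to contradict this by proving $\prod_{\sigma\in S}|\sigma(u)|<1$, extracting everything from the defining constraints of $\cP_{n+k,k}$ and $\sB_n$ together with the choices \eqref{value-R} and \eqref{def-values}.

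To bound the individual factors I would fix a point $c=(c_\sigma)_{\sigma\in S}$ of $B$ (say its centre), write $\rho=\rho(B)$, and for $i=1,2$ pick $\mathbf{y}_i\in\Delta_\varepsilon(p_i,q_i)\cap B$. Comparing the $\sigma$-th coordinate of $\mathbf{y}_i$ with $\sigma(p_i)/\sigma(q_i)$ (the $\sigma$-th coordinate of the centre of $\Delta_\varepsilon(p_i,q_i)$) and with $c_\sigma$ gives
$$|\sigma(q_i)c_\sigma-\sigma(p_i)|\le\rho\,|\sigma(q_i)|+\frac{\varepsilon}{\|q_i\|_{\rr}^{r_\sigma}}\quad(\sigma\in S_1),\qquad |\sigma(q_i)c_\sigma-\sigma(p_i)|\le\rho\,|\sigma(q_i)|+\varepsilon\le 2\varepsilon\quad(\sigma\in S_2),$$
where on $S_2$ one uses $|\sigma(q_i)|\le\varepsilon\le 1$ and $\rho\le\rho_0<1$. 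Writing $\sigma(u)=\sigma(q_2)\big(\sigma(p_1)-\sigma(q_1)c_\sigma\big)-\sigma(q_1)\big(\sigma(p_2)-\sigma(q_2)c_\sigma\big)$ and inserting the above then yields
$$|\sigma(u)|\le 2\rho\,|\sigma(q_1)||\sigma(q_2)|+\frac{\varepsilon\,|\sigma(q_2)|}{\|q_1\|_{\rr}^{r_\sigma}}+\frac{\varepsilon\,|\sigma(q_1)|}{\|q_2\|_{\rr}^{r_\sigma}}\quad(\sigma\in S_1),\qquad |\sigma(u)|\le 4\varepsilon^2\quad(\sigma\in S_2).$$

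The final step is to multiply these $d$ estimates and verify that the product is $<1$. The inputs available for this are: for $\sigma\in S_1$, the bound $|\sigma(q_i)|\le\|q_i\|_{\rr}^{r_\sigma}$ from \eqref{ine-q-qr} and the bound $H(q_i)<H_{n+k+1}$ (which gives $|\sigma(q_i)|<H_{n+k+1}\|q_i\|_{\rr}^{-r_\sigma}$, and, multiplied with the first, $|\sigma(q_i)|<H_{n+k+1}^{1/2}$); the normalisation $\prod_{\sigma\in S_1}\|q_i\|_{\rr}^{r_\sigma}=\|q_i\|_{\rr}$ (since $\sum_{\sigma\in S_1}r_\sigma=1$); the fact that $\|q_i\|_{\rr}^{2r}$ lies in the narrow window $[H_{n+k}R^{(4k-4)d},H_{n+k}R^{4kd})$, which bounds $\|q_1\|_{\rr}/\|q_2\|_{\rr}$ by $R^{2d/r}$ and traps $\|q_1\|_{\rr}\|q_2\|_{\rr}$ between $(H_{n+k}R^{(4k-4)d})^{1/r}$ and $(H_{n+k}R^{4kd})^{1/r}$; the trivial bound $|N(q_i)|\ge 1$; the bound $\rho\le R^{-n}\rho_0$ from $B\in\sB_n$; and the explicit values $\varepsilon=\tfrac14\rho_0R^{-4d}$, $H_m=\tfrac14R^{m-4d}$. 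Feeding all of this in and using \eqref{value-R}, the exponent of $R$ in the resulting estimate should come out negative, so the product drops below $1$; the factor-$R^{4d}$ buffer built into $\cP_{n+k,k}$ (the gap between the exponents $(4k-4)d$ and $4kd$) and the $\tfrac14$'s in \eqref{def-values} are precisely the slack that makes the inequality go through, and it may also be necessary to invoke \eqref{ine-hq-qr} once more to see that $\cP_{n+k,k}\ne\emptyset$ already forces $k$ to be controlled by $n$ (in the spirit of the preceding lemma, where $\cP_{n,k}=\emptyset$ for $k\ge n$), which is what neutralises the terms $\rho\,H_{n+k+1}$ that would otherwise grow with $k$. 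I expect this last multi‑parameter estimate to be the only real obstacle: one has to choose, direction by direction in $S_1$, whether to bound $|\sigma(q_i)|$ by $\|q_i\|_{\rr}^{r_\sigma}$ or by $H_{n+k+1}\|q_i\|_{\rr}^{-r_\sigma}$ so that the final power of $R$ is negative uniformly in $n\ge0$, $k\ge1$ and in the weight vector $\rr$; everything else is routine bookkeeping.
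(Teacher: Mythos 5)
Your set-up matches the paper's exactly: argue by contradiction, put $u=p_1q_2-p_2q_1\ne 0$ so $|N(u)|\ge 1$, and derive the componentwise bounds
$|\sigma(u)|\le 2\rho\,|\sigma(q_1)||\sigma(q_2)|+\varepsilon|\sigma(q_2)|\|q_1\|_{\rr}^{-r_\sigma}+\varepsilon|\sigma(q_1)|\|q_2\|_{\rr}^{-r_\sigma}$ for $\sigma\in S_1$ and $|\sigma(u)|\le 4\varepsilon^2$ for $\sigma\in S_2$. These are precisely the paper's \eqref{e:dist-component} and \eqref{e:dist-component2} after clearing denominators, so up to this point you have faithfully reproduced the argument. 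The paper then shows $\prod_\sigma|\sigma(u)|<1$ via \eqref{ine-s1} and \eqref{ine-s2}, which contradicts $|N(u)|\ge 1$.

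The gap is in the final multiplication, which you yourself flag as ``the only real obstacle,'' and the mechanism you reach for to close it is the wrong one. You suggest invoking the emptiness of $\cP_{n,k}$ for $k\ge n$ to ``control $k$ by $n$'' and thereby neutralise the term $\rho\,H_{n+k+1}\lesssim\varepsilon R^{k+1}$; the paper uses no such bound on $k$, and such a bound would not help anyway since the resulting $R^{k+1}$ is then traded for a comparable power of $n$. What actually kills the $R^{k}$ growth is a two-step manoeuvre through the height function $H$. First, combine $B\in\sB_n$ with $q\in\cP_{n+k}$ to get $\rho(B)\le R^{k+1}\varepsilon/H(q)$ (the paper's \eqref{e:compare}), and then use $H(q)\ge|\sigma(q)|\,\|q\|_{\rr}^{r_\sigma}$ to absorb the $\rho$-term into the other two: $2\rho|\sigma(q_1)||\sigma(q_2)|\le R^{k+1}\bigl(\varepsilon|\sigma(q_2)|\|q_1\|_{\rr}^{-r_\sigma}+\varepsilon|\sigma(q_1)|\|q_2\|_{\rr}^{-r_\sigma}\bigr)$, so the whole $S_1$-product is $\lesssim R^{(k+1)d_1}\varepsilon^{d_1}\prod_{\sigma\in S_1}\bigl(|\sigma(q_1)|\|q_1\|_{\rr}^{-r_\sigma}+|\sigma(q_2)|\|q_2\|_{\rr}^{-r_\sigma}\bigr)\cdot|N_{S_1}(q_1q_2)|^{-1}$ up to bounded constants. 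Second, single out $\omega$ (where $r_\omega=r$ is maximal) and use $|\omega(q_i)|/\|q_i\|_{\rr}^{r}\le H(q_i)/\|q_i\|_{\rr}^{2r}$, and it is precisely the lower bound $\|q_i\|_{\rr}^{2r}\ge H_{n+k}R^{(4k-4)d}\gtrsim H(q_i)R^{(4k-4)d}$ in the definition of $\cP_{n+k,k}$ that supplies the compensating factor $R^{-(4k-4)d}$, dominating $R^{(k+1)d_1}$ for every $k\ge 1$ and uniformly in $n$ and $\rr$. Your sentence about choosing, $\sigma$ by $\sigma$, between $|\sigma(q)|\le\|q\|_{\rr}^{r_\sigma}$ and $|\sigma(q)|\le H(q)\|q\|_{\rr}^{-r_\sigma}$ is close to this idea, but without the absorption of the $\rho$-term through $\rho\le R^{k+1}\varepsilon/H(q)$ and without tying the $\omega$-factor to $R^{-(4k-4)d}$ you will not get the powers of $R$ to cancel, and the proof does not close as written.
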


\begin{proof}
For any $B\in \sB_n$ and $q\in \cP_{n+k,k}$, we have
\begin{equation}\label{e:compare}
\rho(B)\le  \frac{R^{k+1}\varepsilon}{H(q)}
\end{equation}
by \eqref{def-values}. Suppose the contrary that we have two pairs $(p_1,q_1)$ and $(p_2,q_2)$ with
\begin{equation}\label{e:noteq}
\frac{p_1}{q_1}\ne \frac{p_2}{q_2}
\end{equation}
satisfying
\begin{equation}\label{e:intersect}
\Delta_{\varepsilon}(p_1,q_1)\cap B \ne \emptyset \text{ and } \Delta_{\varepsilon}(p_2,q_2)\cap B \ne \emptyset.
\end{equation}
Then it follows from \eqref{e:noteq} that
\begin{equation}\label{e:arith}
\left|\prod_{\sigma\in S} \left(\frac{\sigma(p_1)}{\sigma(q_1)}-\frac{\sigma(p_2)}{\sigma(q_2)}\right)\right|
\ge \left|\frac{N(p_1q_2-p_2q_1)}{N(q_1q_2)}\right| \ge \frac{1}{|N(q_1q_2)|}.
\end{equation}

Now we claim that we can also prove the following inequality
\begin{equation}\label{e:dist}
\left|\prod_{\sigma\in S} \left(\frac{\sigma(p_1)}{\sigma(q_1)}-\frac{\sigma(p_2)}{\sigma(q_2)}\right)\right|<\frac{1}{|N(q_1q_2)|},
\end{equation}
which contradicts \eqref{e:arith}, hence completes the proof of the lemma.
Indeed it follows from \eqref{e:intersect} and the definition of $\Delta_{\varepsilon}(p,q)$ that, for all $\sigma\in S_1$, we have
\begin{equation}\label{e:dist-component}
\left|\frac{\sigma(p_1)}{\sigma(q_1)}-\frac{\sigma(p_2)}{\sigma(q_2)}\right|\le
\frac{\varepsilon}{|\sigma(q_1)|\|q_1\|_{\rr}^{r_{\sigma}}}+\frac{\varepsilon}{|\sigma(q_2)|\|q_2\|_{\rr}^{r_{\sigma}}}+2\rho(B).
\end{equation}
In view of \eqref{e:compare} and \eqref{e:dist-component}, we have
\begin{align}\label{ine-s1}
 \left|\prod_{\sigma\in S_1} \left(\frac{\sigma(p_1)}{\sigma(q_1)}-\frac{\sigma(p_2)}{\sigma(q_2)}\right)\right|
 &\le \prod_{\sigma\in S_1} \left(\frac{\varepsilon}{|\sigma(q_1)|\|q_1\|_{\rr}^{r_{\sigma}}}+\frac{\varepsilon}{|\sigma(q_2)|\|q_2\|_{\rr}^{r_{\sigma}}}+2\rho(B)\right)
 \notag \\
 &\le \prod_{\sigma\in S_1} \left(\frac{\varepsilon}{|\sigma(q_1)|\|q_1\|_{\rr}^{r_{\sigma}}}+\frac{\varepsilon}{|\sigma(q_2)|\|q_2\|_{\rr}^{r_{\sigma}}}+
  \frac{2R^{k+1}\varepsilon}{\max\{H(q_1), H(q_2)\} }\right)  \notag\\
 &\le  (R^{k+1}+1)^d \prod_{\sigma\in S_1} \frac{\varepsilon}{|\sigma(q_1q_2)|}
 \left(\frac{|\sigma(q_1)|}{\|q_2\|_{\rr}^{r_{\sigma}}}+\frac{|\sigma(q_2)|}{\|q_1\|_{\rr}^{r_{\sigma}}}\right) \notag \\
 &\le 2^dR^{dk+d}  \frac{\varepsilon^d}{|\prod_{\sigma\in S_1}\sigma(q_1q_2)|}
 \prod_{\sigma\in S_1}R^{4r_{\sigma}d}\left(\frac{|\sigma(q_1)|}{\|q_1\|_{\rr}^{r_{\sigma}}}+\frac{|\sigma(q_2)|}{\|q_2\|_{\rr}^{r_{\sigma}}}\right)\notag\\
 &\le 2^{2d-1} R^{dk+5d}\varepsilon^d \frac{1}{|\prod_{\sigma\in S_1}\sigma(q_1q_2)|}
  \left(\frac{|\omega(q_1)|}{\|q_1\|_{\rr}^{r}}+\frac{|\omega(q_2)|}{\|q_2\|_{\rr}^{r}}\right) \notag \\
  &\le 2^{2d-1} R^{dk+5d}\varepsilon^d \frac{1}{|\prod_{\sigma\in S_1}\sigma(q_1q_2)|}
  \left(\frac{|H(q_1)|}{\|q_1\|_{\rr}^{2r}}+\frac{|H(q_2)|}{\|q_2\|_{\rr}^{2r}}\right) \notag \\
 &\le 2^{2d} R^{dk+5d}R^{-(4k-4)d}\varepsilon^d\frac{1}{|\prod_{\sigma\in S_1}\sigma(q_1q_2)|} \notag \\
 &< \frac{1}{|\prod_{\sigma\in S_1}\sigma(q_1q_2)|}.
 \end{align}
 On the other hand,  it follows  from \eqref{e:intersect} and the definition of $\Delta_{\varepsilon}(p,q)$ that, for all $\sigma\in S_2$, we have
\begin{equation}\label{e:dist-component2}
\left|\frac{\sigma(p_1)}{\sigma(q_1)}-\frac{\sigma(p_2)}{\sigma(q_2)}\right|\le
\frac{\varepsilon}{|\sigma(q_1)}+\frac{\varepsilon}{|\sigma(q_2)|}+2\rho(B).
\end{equation}
In view of \eqref{e:dist-component2} and the assumption $\rho_0<1$, we have
\begin{align}\label{ine-s2}
\left|\prod_{\sigma\in S_2} \left(\frac{\sigma(p_1)}{\sigma(q_1)}-\frac{\sigma(p_2)}{\sigma(q_2)}\right)\right|
&\le \prod_{\sigma\in S_2}\left(\frac{\varepsilon}{|\sigma(q_1)|}+\frac{\varepsilon}{|\sigma(q_2)|}+2\rho(B)\right) \notag\\
&\le \prod_{\sigma\in S_2} \frac{4\varepsilon^2}{|\sigma(q_1q_2)|} <\frac{1}{\prod_{\sigma\in S_2}|\sigma(q_1q_2)|}.
\end{align}
Note that we have used the fact that $|\sigma(q)|\le \varepsilon$ for $\sigma\in S_2$ and $q\in \cO_K(\rr,\varepsilon)$ and an elementary inequality saying that $4ab\ge a+b+2$ for $a,b\ge 1$.
Now \eqref{e:dist} follows from \eqref{ine-s1} and \eqref{ine-s2}. Hence our proof is completed.
\end{proof}

Now we are in a position to prove Proposition \ref{main lemma}.

\begin{proof}[Proof of Proposition  \ref{main lemma}.]
For any $B\in \sB_n$ and $k\ge 1$, denote the unique point given by Lemma \ref{lemma qrank1} as
$$\bs(k,B)=(s_{\sigma}(k,B))_{\sigma\in S}.$$
Then it follows from Lemma \ref{lemma qrank1} and the definition of $\cP_{n+k,k}$ that
\begin{equation*}
\bigcup_{(p,q)\in \cP_{n+k,k}}\Delta_{\varepsilon}(p,q) \cap B \subset \bigcup_{\tau\in S}E_{\tau}(k,B)^{(R^{-n-k}\rho_0)},
\end{equation*}
where the hyperplane $E_{\tau}(k,B)$ is defined as
\begin{equation}\label{def-hyperplane}
E_{\tau}(k,B):=\{\bx\in \prod_{\sigma\in S}\RR: x_{\tau}=s_{\tau}(k,B) \}.
\end{equation}

As those $\sB_n$ are mutually disjoint, for each $i\ge 0$ there exists at most one $n\geq 0$ with $B_{i}\in \sB_n$. According to the definition of $(\beta,\gamma)$-hyperplane potential game, we have $\rho_{i+1}\geq\beta \rho_{i}$. In view of \cite[Remark 2.4]{AGK}, we may assume that $\rho_0\rightarrow 0$.
Hence for each $n\geq 0$, there exists an $i\geq 0$ with $B_{i}\in\sB_n$. Let
$i(n)$ denote the smallest $i$ with $B_{i}\in\sB_n$. Then, the map
$n\mapsto i(n)$ is an injective one from $\ZZ_{\geq 0}$ to
$\mathbb{Z}_{\geq 0}$. Let Alice play according to the following strategy: each time after
Bob chooses a closed ball $B_i$, if $i=i(n)$ for some $n\geq 0$, then Alice chooses
the family of hyperplane neighborhoods
$$\{E_{\tau}(k,B_{i(n)})^{(R^{-n-k}\rho_0)}:\tau\in S, k\in\NN\}.$$
where $E_{\tau}(k,B_{i(n)})$ is the hyperplane given by \eqref{def-hyperplane}. Otherwise Alice makes
an arbitrary move. Since $B_{i(n)}\in\sB_n$, $\rho_{i(n)}>\beta R^{-n}\rho_0$. Then,
 \eqref{value-R} implies that
$$\sum_{\tau\in S, k=1}^\infty(R^{-n-k}\rho_0)^\gamma=d(R^{-n}\rho_0)^\gamma(R^\gamma-1)^{-1}
\leq \left(\frac{\rho_{i}}{\beta}\right)^{\gamma}\left(\frac{\beta^2}{2}\right)^{\gamma} < (\beta \rho_{i})^\gamma.$$
Hence  Alice's move
is legal.
Then we have
\begin{align*}
\bigcap_{i=0}^\infty B_i
& =\bigcap_{i=0}^{\infty}B_i\cap\Big(\Bad(K,\rr)\cup \bigcup_{(p,q)\in \cO_K\times\cO_K(\rr,\epsilon)}\Delta_{\varepsilon}(p,q) \Big) \\
& =\bigcap_{i=0}^{\infty}B_i\cap\Big(\Bad(K,\rr)\cup \bigcup_{n=0}^{\infty}
\bigcup_{k=1}^{\infty}\bigcup_{q\in \cP_{n+k,k}}\bigcup_{p\in \cO_K}\Delta_{\varepsilon}(p,q)\Big)\\
& \subset \Bad(K,\rr)\cup\Big(\bigcup_{n=0}^{\infty}\bigcup_{k=1}^{\infty}
\bigcup_{q\in \cP_{n+k,k}}\bigcup_{p\in \cO_K}\Delta_{\varepsilon}(p,q)\cap B_{i(n)}\Big)\\
& =\Bad(K,\rr)\cup\Big(\bigcup_{n=0}^{\infty}\bigcup_{k=1}^{\infty}
\bigcup_{(p.q)\in \cP_{n+k,k}(B_{i(n)})}\Delta_{\varepsilon}(p,q)\cap B_{i(n)}\Big)\\
& \subset \Bad(K,\rr)\cup \Big(\bigcup_{n=0}^{\infty}\bigcup_{k=1}^\infty \bigcup_{\tau\in S} E_{\tau}(k,B_{i(n)})^{(R^{-n-k}\rho_0)}\Big).
\end{align*}
Thus the unique point $\bx_\infty\in\bigcap_{i=0}^\infty B_i$ lies
in $$\Bad(K,\rr)\cup \Big(\bigcup_{n=0}^{\infty}\bigcup_{k=1}^\infty \bigcup_{\tau\in S}
E_{\tau}(k,B_{i(n)})^{(R^{-n-k}\rho_0)}\Big).$$
Hence, Alice wins.
\end{proof}

\section{Proof of the main theorem}
We shall need the following simple observation.
\begin{lemma}\label{dec-irr}
Let $\Gamma$ and $\Gamma'$ be lattices in $G$ such that $\Gamma$ is commensurable with $\Gamma'$. Then for any subgroup $F$ of $G$, there holds
\begin{equation*}
\text{$E(F)$ is HAW on $\ggm \Longleftrightarrow$ $E(F)$ is HAW on $ G/\Gamma'$}.
\end{equation*}
\end{lemma}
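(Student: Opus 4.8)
The plan is to reduce to the case where one lattice contains the other with finite index, and then to exploit that the natural covering map between the two quotients is a local diffeomorphism that sends bounded orbits to bounded orbits. Concretely, suppose first that $\Gamma' \subset \Gamma$ with $[\Gamma:\Gamma'] < \infty$. Then the map $p \colon G/\Gamma' \to G/\Gamma$, $g\Gamma' \mapsto g\Gamma$, is a finite covering, hence a $C^1$ local diffeomorphism, and it is proper (the fibers are finite and $G/\Gamma'$ is, like $G/\Gamma$, a manifold on which $p$ is a covering of compact-or-finite-volume spaces). The key observation is that for $x' \in G/\Gamma'$, the orbit $F^+x'$ is bounded in $G/\Gamma'$ if and only if $F^+ p(x') = p(F^+ x')$ is bounded in $G/\Gamma$: one direction is immediate since $p$ is continuous, and the other follows because $p$ is proper, so the preimage of a bounded (i.e. relatively compact) set is bounded. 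Consequently $E(F^+)$ on $G/\Gamma'$ is exactly $p^{-1}\big(E(F^+) \text{ on } G/\Gamma\big)$.

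Next I would transfer the HAW property across $p$. Since $p$ is a local $C^1$ diffeomorphism, cover $G/\Gamma$ by open sets $U_\alpha$ over which $p$ is trivialized, so that $p^{-1}(U_\alpha)$ is a disjoint union of opens $V_{\alpha,j}$ each mapped diffeomorphically onto $U_\alpha$. By Lemma \ref{haw-manifold}(4), $E(F^+)$ is HAW on $G/\Gamma$ iff $E(F^+) \cap U_\alpha$ is HAW on $U_\alpha$ for each $\alpha$; applying the diffeomorphism invariance Lemma \ref{haw-manifold}(3) to each restriction $p\colon V_{\alpha,j} \to U_\alpha$, this is equivalent to $p^{-1}(E(F^+)) \cap V_{\alpha,j}$ being HAW on $V_{\alpha,j}$ for all $\alpha, j$; since $\{V_{\alpha,j}\}$ is an open cover of $G/\Gamma'$ and $p^{-1}(E(F^+)) = E(F^+)$ on $G/\Gamma'$, a second application of Lemma \ref{haw-manifold}(4) shows this is equivalent to $E(F^+)$ being HAW on $G/\Gamma'$. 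This establishes the lemma when $\Gamma' \subseteq \Gamma$.

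Finally, to handle general commensurable $\Gamma, \Gamma'$, set $\Gamma'' = \Gamma \cap \Gamma'$, which by definition of commensurability has finite index in both $\Gamma$ and $\Gamma'$ and is itself a lattice in $G$. Applying the finite-index case twice gives
\begin{equation*}
E(F^+) \text{ HAW on } G/\Gamma \iff E(F^+) \text{ HAW on } G/\Gamma'' \iff E(F^+) \text{ HAW on } G/\Gamma',
\end{equation*}
which is the assertion. I expect the only genuinely delicate point to be the verification that $p$ is proper and that properness yields the orbit-boundedness equivalence cleanly — everything else is a bookkeeping exercise with the atlas-based definition of HAW and the invariance properties already recorded in Lemma \ref{haw-manifold}. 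One should also note in passing that $F^+$ acts on both quotients compatibly via left translation, so $p$ is $F^+$-equivariant, which is what makes $p(F^+x') = F^+p(x')$ hold on the nose.
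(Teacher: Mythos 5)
Your proposal is correct and follows essentially the same route as the paper: reduce to the case $\Gamma' \subset \Gamma$ via $\Gamma'' = \Gamma \cap \Gamma'$, observe that the natural projection is a finite covering map, and invoke the invariance and locality properties of HAW from Lemma \ref{haw-manifold}. The paper's proof is a one-paragraph sketch of exactly these steps; you have simply filled in the details about properness of the covering, the identification $E(F^+)$ on $G/\Gamma'$ with $p^{-1}\big(E(F^+)\text{ on }G/\Gamma\big)$, and the open-cover bookkeeping using parts (3) and (4) of Lemma \ref{haw-manifold}.
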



\begin{proof}

As $\Gamma, \Gamma'$ are commensurable with each other, the group $\Gamma''=\Gamma\cap \Gamma'$  is of finite index in both $\Gamma$ and $\Gamma'$,  and hence is a lattice subgroup of $G$. By replacing $\Gamma'$ with $\Gamma''$, the proof of the lemma can be reduced to the case when $\Gamma'\subset \Gamma$. In this case, the natural projection map $\pi:G/\Gamma\mapsto G/\Gamma'$ is a finite covering map. Now the lemma follows from Lemma \ref{haw-manifold}.
\end{proof}

\begin{proof}[Proof of Theorem \ref{main thm}]
Let $G$ be a product of copies of $\SL_2(\RR)$ and $\Gamma$ a lattice. Then according to \cite[Theorem 5.22]{Rag}, there exist  $G_i \ (1\le i\le k)$ which are products of copies of $\SL_2(\RR)$ such that $\prod_{i=1}^{k} G_i=G$ and irreducible lattices $\Gamma_i\subset G_i$ such that
 the lattice $\Gamma$ is commensurable with $\Gamma_1\times\cdots\times \Gamma_k$.  In view of Lemma \ref{dec-irr}, we are reduced to consider the case when  $\Gamma=\Gamma_1\times\cdots\times \Gamma_k$. Moreover, since an orbit is bounded on $\ggm$ if and only if its projection is bounded on each $G_i/\Gamma_i$, we are reduced to consider the case when $\Gamma$ itself is irreducible and not cocompact by applying Lemma \ref{haw-manifold} (5).
Now there are two cases:

\textbf{Case 1.} Suppose $G=\SL_2(\RR)$. This is done explicitly in \cite[Theorem 3.7]{KW3}.

\textbf{Case 2.} Suppose $G$ is a  product of more than two copies of $\SL_2(\RR)$. Then it follows from Margulis arithmeticity theorem \cite[Chapter IX, Theorem 1.9A]{Mar} that this $\Gamma$ is arithmetic, i.e., $\Gamma$ is commensurable with $\bG(\ZZ)$ with $\bG$ a $\QQ$-simple semisimple group. Then $\bG=\Res_{K/\QQ}\bG'$ with $\bG'$ a $K$-form of $\SL_2$ for some totally real field $K$. Since $\Gamma$ is not cocompact, we have $\bG'$ is $K$-isotropic. Hence $\bG'=\mathbf{\SL_2}$ and  $\Gamma$ is commensurable with $\Res_{K/\QQ}\SL_2(\ZZ)$. Now let $F=\{g_t: t\in \RR\}$ be any one-parameter $\Ad$-semisimple subgroup of $G$. By the real Jordan decomposition, we know that $g_t=g_t'g_t''$ where $g_t'$ is diagonalizable over $\RR$ and $\{g_t'' : t\in \RR\}$ is bounded. Then it is clear that $E(F)=E(F')$ where $F'=\{g_t': t\in \RR\}$. It is easily checked that there exist $h\in G$ and $\rr$ such that $F'=hF_\rr h^{-1}$. Thus, we have $E(F')=hE(F_\rr)$. Then it follows from Lemma  \ref{haw-manifold}(3) and Proposition \ref{main prop}, that $E(F)$ is HAW. This completes the proof.
\end{proof}

\end{document}